\def\input@path{{./Tables}{./Figures}{./Figures_lattice}{./Temp_Figures}}
\setlist[enumerate]{leftmargin=.5in}
\setlist[itemize]{leftmargin=.5in}
\crefname{hypothesis}{Hypothesis}{Hypotheses}
\newcommand{\exref}[1]{\hyperref[#1]{Example~\ref*{#1}}}
\newcommand{\apref}[1]{\hyperref[#1]{Appendix~\ref*{#1}}}
\title{Recovery of Integer Images from Minimal DFT Samples: Uniqueness and Inversion Algorithms\thanks{Submitted to the editors April 15, 2026.
\funding{This work was funded by the NSF, Grant 2513653}}}
\author{Howard W.~Levinson\thanks{Department of Computer Science, Oberlin College, Oberlin, OH 44074 
  (\email{hlevinso@oberlin.edu}).}\and
Isaac Viviano\thanks{Oberlin College, Oberlin, OH 44074 
  (\email{iviviano@oberlin.edu}}).}
\newcommand*{\addFileDependency}[1]{
  \typeout{(#1)}
  \@addtofilelist{#1}
  \IfFileExists{#1}{}{\typeout{No file #1.}}
}
\DeclarePairedDelimiterX\set[1]\lbrace\rbrace{\def\given{\;\delimsize\vert\;}#1}
\DeclarePairedDelimiterX\event[1][]{\def\given{\;\delimsize\vert\;}#1}
\DeclarePairedDelimiter{\abs}{\lvert}{\rvert}
\DeclarePairedDelimiter{\norm}{\lVert}{\rVert}
\DeclarePairedDelimiter{\pareno}{(}{)} 
\NewDocumentCommand{\bigo}{s o m}{%
  \mathcal{O}\IfBooleanTF{#1}
    {\pareno*{#3}}%
    {\IfValueTF{#2}{\pareno[#2]{#3}}{\pareno{#3}}}%
}
\DeclareMathOperator*{\lcm}{lcm}
\newcommand{\R}{\mathbb{R}}
\newcommand{\C}{\mathbb{C}}
\newcommand{\Z}{\mathbb{Z}}
\newcommand{\cu}{\mathrm i}
\newcommand{\divs}{\operatorname{\mid}}
\newcommand{\ndivs}{\operatorname{\nmid}}
\newcommand{\eset}{\varnothing}
\newcommand{\uroot}[2]{\eta_{#1}^{#2}}
\newcommand{\idmat}[1]{{\tt I}_{#1}}
\newcommand{\e}[1]{e^{#1}}
\newcommand{\sumstack}[4]{
\underset{
    {#2}~({\rm mod}~{#3})
}{
    {\sum_{#1}~~~{#4}}
}
}
\begin{document}

\maketitle

\begin{abstract}
Exact reconstruction of an image from measurements of its Discrete Fourier Transform (DFT) typically requires all DFT coefficients to be available.
However, incorporating the prior assumption that the image contains only integer values enables unique recovery from a limited subset of DFT coefficients. 
This paper develops both theoretical and algorithmic foundations for this problem. 
We use algebraic properties of the DFT to define a reduction from two-dimensional recovery to several well-chosen one-dimensional recoveries.
Our reduction framework characterizes the minimum number and location of DFT coefficients that must be sampled to guarantee unique reconstruction of an integer-valued image. 
Algorithmically, we develop reconstruction procedures which use dynamic programming to efficiently recover an integer signal or image from its minimal set of DFT measurements. 
While the new inversion algorithms still involve NP-hard subproblems,
we demonstrate how the divide-and-conquer approach drastically reduces the associated search space.
To solve the NP-hard subproblems, we employ a lattice-based framework which leverages the LLL approximation algorithm to make the algorithms fast and practical.
\end{abstract}

\begin{keywords}
2D DFT,
Minimal sampling,
Reconstruction,
Lattice algorithms
\end{keywords}

\begin{AMS}
	68U10, 90C39,	90C10, 65T50
\end{AMS}

\section{Introduction} 
Reconstructing a vector or matrix from limited discrete Fourier transform (DFT) measurements is a classic problem in signal processing and imaging~\cite{candes2006stable}.  
This challenge arises in many applications where noise corrupts parts of the data, or measuring the full spectrum is either impractical or prohibitively expensive. 
Whether or not we control the sampling strategy,
successful recovery from a limited spectrum often requires some form of {\it a priori} knowledge of the underlying signal or image.
For example, sparsity constraints have revolutionized the field through the framework of compressed sensing, enabling dramatic reductions in the number of required measurements, along with corresponding fast reconstruction algorithms~\cite{donoho2006compressed,duarte2011structured,gilbert2002near,gilbert2005improved,rudelson2008sparse,rudelson2006sparse,candes2006robust}.  
Related approaches study optimal sampling for sparse signals and can be used to develop sparse fast Fourier transform methods which are faster than their standard counterparts~\cite{plonka2016deterministic,rajaby2022structured,bittens2019deterministic,hsieh2013sparse,gilbert2014recent}.

While sparsity has dominated much of this research, another type of prior information has also been studied: that the unknown signal or image is restricted to a small set of discrete values~\cite{herman2012discrete,boufounos20081}.  
This line of work often leverages the algebraic structure of the DFT in combination with the discrete nature of the signal values. 
However, to our knowledge, the case of integer-valued data has not yet been comprehensively addressed.  
In this paper, we study the recovery problem for integer matrices in general. Our first fundamental question is:
\begin{problem} \label{prob:uniq}
Let ${\tt X}$ be an $N_1\times N_2$ integer matrix.
What is a minimal set of DFT coefficients required for ${\tt X}$ to be uniquely recoverable?
\end{problem}%
\noindent
Our results both characterize the minimal number of Fourier samples needed and describe their locations in the frequency domain.

Even when the sampled spectrum guarantees uniqueness, 
the reconstruction problem itself is highly nontrivial.
Standard continuous optimization methods are often ineffective over the integers, since small errors in the solution can correspond to large errors in the data~\cite{parker2014discrete,alpers2002stability}. 
Even for the simplest case of binary signals, 
many discrete optimization methods reduce to solving NP-hard problems~\cite{karp1975computational}. 
This motivates our second fundamental question:
\begin{problem} \label{prob:alg}
How can an integer matrix ${\tt X}$ be efficiently recovered from a subset of its DFT coefficients?
\end{problem}%
\noindent
To address this, we develop a dynamic programming framework that leverages approximation algorithms for efficient recovery. 
Our approach efficiently recovers integer matrices with a moderate range of values and scales to reasonably sized images. 

Beyond the theoretical interest,
these questions arise naturally in many practical settings.
Integer-valued models readily appear in  signal processing~\cite{joseph2025low,zhan2014integer}, tomography~\cite{herman2003discrete,kadu2019convex,batenburg2011dart,batenburg20093d,herman2012discrete,YAGLE2001}, 
and binary codes and image processing \cite{esedoglu2003blind,Zenzo1996,REN2002,machand-maillet2000}.
In all of these applications, data are often quantized or inherently integer-valued.

This work makes three main contributions:
We establish a minimal sampling theorem for integer matrices, identifying both the number and location of DFT coefficients required for unique recovery. 
We then develop a dynamic programming algorithmic framework for efficient reconstruction of integer matrices from partial Fourier data. 
Lastly, 
we demonstrate the advantage of a lattice-based implementation of the algorithm through a numerical comparison with integer linear programming for the practical recovery of large images.

\subsection{Related Works}
The current work directly extends the results of \cite{pei2022binary1D}, which studied the problem of recovering binary signals from sampled DFT measurements.
Their work proved that a binary signal of length $N$ requires at least $\tau(N)$  DFT coefficients for unique inversion, where $\tau(N)$ is the number of divisors of $N$.  
This result readily generalizes to integer signals.  
Their paper also shows this bound is tight when considering the set of all binary signals of length $N$.
While the authors provide an example illustrating how the problem can extend to two dimensions, no general formalization was developed. 
Our work fully develops and solves this two-dimensional generalization.

In \cite{pei2022binary1D}, a divide-and-conquer algorithm was introduced for recovering binary signals of length $N=2^\alpha$.
In our current work, we extend this framework to handle signals of arbitrary length $N$ in one dimension before further generalizing to two dimensions.
Our approach incorporates dynamic programming for efficiency, which is required by the presence of additional prime factors in $N$. 
Furthermore, while their method relied on integer linear programming, our use of lattice-based techniques
enables faster recovery and removal of the binary constraint,
allowing the new method to scale to larger signals and images. 

Our previous works~\cite{levinson2021,levinson2023} studied related partial data recovery problems for binary vectors and matrices in the bandlimited setting, which restricts the known DFT coefficients to a frequency band defined by $|k|,|l| \le L$ for a bandwidth parameter $L$.
This bandlimited sampling models blurring of a signal or image with a low-pass filter.
The restrictive shape of the band makes it challenging to identify simple formulas for bandwidths $L$ that guarantee uniqueness for general $N_1, N_2$. 
Therefore, these works were limited to the case where $N_1=N_2=p^\alpha$ for a prime $p$, or $N_1$ and $N_2$ were different primes.  
While the algorithm framework developed in the current work builds on these methods, the presence of multiple prime factors in $N_1$ and $N_2$ enables a new dynamic programming approach. 
Moreover, in the bandlimited setting, the minimal band required for uniqueness includes more coefficients than are strictly required.
In contrast, this work focuses on recovery from a minimal set of DFT coefficients. 
The pass-band shape introduces an asymmetry between the number of DFT coefficients available to each dynamic programming subproblem. 
Subproblems with more data have improved stability,
so the bandlimited setup can facilitate more stable algorithms which leverage both lattice methods and integer linear programming.

Further related works address binary reconstruction from incomplete Fourier data, both in the bandlimited setting \cite{nashold2002synthesis,nashold1989synthesis} and with unrestricted sampling \cite{mao2012reconstruction}. 
Other approaches exploit discrete-valued models in alternative bases to enhance reconstruction and guide sampling strategies \cite{vetterli2002sampling}. 
Our uniqueness results focus on the algebraic structure of the DFT, and many works similarly leverage algebraic properties to improve reconstruction methods and establish uncertainty principles \cite{stolk2010algebraic,tropp2008linear,tao_2005_1}.
Our implementations use the LLL approximation algorithm,
which has applications across a wide range of fields, 
including cryptography~\cite{Lenstra1982,Coppersmith1996},
integer relation~\cite{hastad1989,bailey2009},
and combinatorial optimization~\cite{Lenstra1983,Aardal2000}.

Similar work in the discrete tomography setting reconstructs an image from a collection of projection values,
that is, 
sums of the image along a prescribed set of directions~\cite{hajdu2001algebraic,BRUNETTI20132281,HAJDU2005}.  
By incorporating additional structural assumptions, such as convexity or connectedness, these methods often obtain strong uniqueness guarantees which require only a small number of projection directions~\cite{gardner1999tomography,lungo1999tomography}.  
Corresponding efficient reconstruction algorithms have also been developed~\cite{brunetti2000reconstruction,chrobak1999convex}.
Our work differs in that we impose no structural assumptions on the image, considering only the integer-valued signal model.

Our two-dimensional theory can likewise be interpreted in terms of directional projections.  
However, 
the computational challenge is fundamentally different. 
Rather than directly reconstructing the image from known projections, 
we begin with DFT coefficient measurements
and first recover the projection values by solving an underdetermined system.  
Once these projection values have been recovered, 
image reconstruction is immediate. 
Finally, 
this work considers periodic Discrete Radon Transform projections~\cite{grigoryan-superposition,grigoryan-book,gertner2002new, hsung1996discrete,kingston2007generalised},
whereas the closely related Mojette Transform produces non-periodic projections~\cite{guedon2005mojette,guedon1995psychovisual}.


\subsection{Outline and Notation}

We begin in \cref{sec:background} with background on the problem, including a summary of key results for one-dimensional signals. 
This section also develops additional one-dimensional framework used in the extension to two dimensions. 
\Cref{sec:theory} addresses \cref{prob:uniq}, deriving the number and location of DFT coefficients required for unique recovery of any integer matrix. 
In \cref{sec:algs}, we present the algorithm framework for recovery from a minimal set of DFT coefficients.
Finally, 
\cref{sec:numerics} contains supporting numerical simulations.
\\

Throughout the paper, we use boldface letters to denote vectors (as in ${\bf x}$), with entries written as $x_n$.
Typewriter-style letters denote matrices (as in ${\tt X}$), with entries $X_{mn}$.
We denote the $n\times n$ identity matrix by $\idmat{n}$.
The complex conjugate of a complex number $x$ is denoted by $x^*$.
We use the standard number-theoretic notation $\gcd(m,n)$ and $\lcm(m,n)$ for the greatest common divisor and least common multiple of two integers $m$ and $n$, respectively.
If an integer $m$ is a divisor of $n$, we write $m \divs n$, and otherwise $m \ndivs n$.
We also write $\tau(n)$ for the number of divisors of $n$, $\phi(n)$ for Euler's totient function (the number of positive integers less than $n$ that are relatively prime with $n$), and $\omega(n)$ for the number of distinct prime factors of $n$.   Lastly, we let $\uroot{n}{}=\e{-2\pi\cu/n}$ be an $n$th primitive root of unity.


\section{Problem Setup and Background}
\label{sec:background}

Let ${\tt X}$ be an $N_1 \times N_2$ matrix. The two-dimensional discrete Fourier transform (DFT) of ${\tt X}$ is defined by
\begin{equation}
\label{eq:DFT_2D}
\tilde{X}_{kl} = \sum_{m=0}^{N_1-1}\sum_{n=0}^{N_2-1}X_{mn} \e{-2\pi\cu \left(\frac{mk}{N_1}+\frac{nl}{N_2}\right)} \ .
\end{equation}
The Fourier coefficients $\tilde{X}_{kl}$ are periodic in each index independently, satisfying $\tilde{X}_{k,l} = \tilde{X}_{k+sN_1, l+tN_2}$ for any integers $s$ and $t$. 
A complete set of $N_1N_2$ DFT coefficients consists of taking all $\tilde{X}_{k l}$ with indices $k$ and $l$ in the ranges $s \le k <s+N_1$ and $t \le l <t+N_2$, for some choice of $s$ and $t$. 
We denote this $N_1 \times N_2$ matrix of DFT coefficients by $\tilde{\tt X}$, and typically choose $s=t=0$ so that the index ranges begin at zero.   

While the DFT maps ${\tt X}$ to $\tilde{\tt X}$, the inverse DFT recovers ${\tt X}$ from $\tilde{\tt X}$.  
This inverse transformation is defined by
\begin{equation}
\label{eq:DFT_inv_2D}
 X_{mn} = \frac{1}{N_1N_2} \sum_{k=0}^{N_1 - 1} \sum_{l = 0}^{N_2 - 1}  \tilde{X}_{kl}  \e{2\pi\cu \left(\frac{mk}{N_1}+\frac{nl}{N_2}\right)} \ ,
\end{equation}
and can equivalently be expressed using any indexing that forms a complete set of DFT coefficients.  

We are interested in the partial data question of \cref{prob:uniq}.  If only some of the DFT coefficients $\tilde{X}_{kl}$ are known, can ${\tt X}$ be uniquely reconstructed? 
In general, this is impossible as seen in \cref{eq:DFT_inv_2D}, where changing the value of only one DFT coefficient $\tilde{X}_{kl}$ will change all elements of ${\tt X}$.  
However, incorporating the {\it a priori} knowledge that ${\tt X}$ is integer-valued can make this inverse problem uniquely solvable.  
Existing results on this problem focus primarily on the one-dimensional case  for binary signals. 
Since several aspects of the one-dimensional setting carry over to the two-dimensional problem studied here, we will first summarize the relevant prior results. 
Later in this section, we derive a key property of one-dimensional signals that will be used in the two-dimensional theory.


\subsection{One-Dimensional Results}

The question of uniqueness posed in \cref{prob:uniq} has been fully answered in one dimension~\cite{pei2022binary1D}.  The one-dimensional DFT and inverse DFT are defined by 
\begin{align*} 
    \tilde{x}_k = \sum_{n=0}^{N-1} x_n \e{2\pi \cu nk/N} \qquad \qquad , 
    \qquad \qquad x_n = \frac{1}{N}\sum_{k=0}^{N-1} \tilde{x}_k \e{-2\pi \cu nk/N} \ .
\end{align*}
If ${\bf x}$ is a vector of length $N$ consisting of integer entries, then ${\bf x}$ can be uniquely determined by measurements of the $\tau(N)$ DFT coefficients,
\begin{equation} \label{eq:1D_coeffs}
    S = \set{\tilde x_d : d \divs N}.
\end{equation}
The fact that $S$ is a sufficient set of DFT coefficients for uniqueness comes from the irreducibility of cyclotomic polynomials, which is used to prove the following result.

\begin{lemma}[{\cite[Lemma 4]{pei2022binary1D}}] \label{lem:dep1D}
    Let ${\bf x}$ be an integer signal of length $N$.
    If $\gcd(k, N) = d$ and $\tilde x_k = 0$, then $\tilde x_{k'} = 0$ for all $k'$ such that $\gcd(k', N) = d$.
\end{lemma}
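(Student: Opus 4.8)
The plan is to interpret every DFT coefficient as the value of one fixed integer polynomial at a root of unity, and then to use the irreducibility of cyclotomic polynomials to spread a single zero across an entire ``gcd class.'' Write $p(z) = \sum_{n=0}^{N-1} x_n z^n$, a polynomial in $\mathbb{Z}[z]$ of degree less than $N$, and let $\zeta$ be a primitive $N$th root of unity, so that $\tilde x_k = p(\zeta^k)$ for every $k$ (the conjugation/sign convention in the definition of the DFT is irrelevant to what follows). The lemma then becomes a purely algebraic statement about $p$.

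First I would record the order computation: if $\gcd(k,N) = d$ then $\zeta^k$ has multiplicative order $N/d$, i.e.\ $\zeta^k$ is a \emph{primitive} $(N/d)$th root of unity; conversely, every primitive $(N/d)$th root of unity is of the form $\zeta^{k'}$ for some $k'$ with $\gcd(k',N) = d$. Hence the hypothesis $\tilde x_k = 0$ says exactly that the integer polynomial $p$ has some primitive $(N/d)$th root of unity among its roots, and the conclusion we want is that $p$ vanishes at \emph{all} of them.

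The crux is then a single appeal to cyclotomic irreducibility: the cyclotomic polynomial $\Phi_{N/d}$ is the minimal polynomial over $\mathbb{Q}$ of every primitive $(N/d)$th root of unity, and it is irreducible over $\mathbb{Q}$ (this is the irreducibility fact invoked just before the lemma statement). Since $p \in \mathbb{Q}[z]$ shares a root with the irreducible $\Phi_{N/d}$, we get $\Phi_{N/d} \mid p$ in $\mathbb{Q}[z]$, so $p$ vanishes at every root of $\Phi_{N/d}$, i.e.\ at every primitive $(N/d)$th root of unity. Finally, for any $k'$ with $\gcd(k',N) = d$ the number $\zeta^{k'}$ is such a primitive root by the first step, whence $p(\zeta^{k'}) = 0$, that is $\tilde x_{k'} = 0$.

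I do not expect a real obstacle here; the argument is essentially one application of the irreducibility of $\Phi_{N/d}$. The two points that need a little care are (i) the order computation $\operatorname{ord}(\zeta^k) = N/\gcd(k,N)$ and its converse, which is what makes a ``gcd class'' coincide with the set of primitive $(N/d)$th roots of unity, and (ii) phrasing the propagation of the zero through divisibility of polynomials rather than by ad hoc manipulation --- equivalently, one could argue that every field automorphism of $\mathbb{Q}(\zeta)$ permutes the primitive $(N/d)$th roots of unity while fixing $p$ coefficientwise, but that is the same fact in disguise.
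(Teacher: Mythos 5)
Your proof is correct, and it follows exactly the route the paper indicates: the paper cites this lemma from \citet{pei2022binary1D} without reproducing the proof, but explicitly attributes it to the irreducibility of cyclotomic polynomials, which is precisely your argument via $\Phi_{N/d}\mid p$ in $\mathbb{Q}[z]$. Both the order computation and the identification of the gcd class with the primitive $(N/d)$th roots of unity are handled correctly.
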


Consider two integer signals ${\bf y}$ and ${\bf z}$ of length $N$. 
Since the DFT is bijective, ${\bf y} = {\bf z}$ if and only if $\tilde{{\bf y}} = \tilde{{\bf z}}$.  Let ${\bf x} = {\bf y} - {\bf z}$. 
Applying \cref{lem:dep1D} to the integer signal ${\bf x} = {\bf y} - {\bf z}$ implies that $\tilde{\bf x} = {\bf 0}$ only if $\tilde{x}_k = 0$ for each $k \in S$.
Therefore, two integer signals ${\bf y}$ and ${\bf z}$ are equal only if $\tilde{y}_d = \tilde{z}_d$ for each divisor $d$ of $N$. 

It is also proved in \cite{pei2022binary1D} that every DFT coefficient in the set $S$ from \cref{eq:1D_coeffs} is required to guarantee uniqueness in general, even with further restrictions on the integer values.  
This is not obvious, as many vectors may be uniquely determinable from a smaller set of DFT coefficients. 
For example, consider the class of binary signals ${\bf x}$ where all entries are either 0 or 1.
The only binary signal satisfying $\tilde{x}_0 = 0$ is ${\bf x} = {\bf 0}$, 
so only the $k = 0$ coefficient is required to distinguish ${\bf 0}$ from all other binary signals.
However, for any $N$ and divisor $d$ of $N$, there exist binary signals ${\bf y}$ and ${\bf z}$ such that 
$ \tilde y_k \ne \tilde z_k$ if and only if $ \gcd(k, N) = d$.
Therefore, the signals ${\bf y}$ and ${\bf z}$ may only be distinguished by a DFT coefficient of frequency $k$ satisfying $\gcd(k, N) = d$, 
preventing the inverse problem from being solved uniquely if $\tilde{x}_d$ is removed from $S$.
We will require a weaker version of this statement, which only considers the divisor $d=1$. 
This relaxation allows for a simpler proof, which is provided below.  Note that while the discussion has focused on distinguishing the pair of binary vectors ${\bf y}$ and ${\bf z}$, the following lemma relates to the vector ${\bf x} = {\bf y} - {\bf z}$ which has entries in $\{-1,0,1\}$.

\begin{lemma}[{Special case of \cite[Lemma 2]{pei2022binary1D}}]
\label{lem:amb1D}
    For any positive integer $N$, there exists a signal ${\bf x}$ of length $N$ with entries in $\set{-1, 0, 1}$ such that $
        \tilde{x}_k \ne 0 $ if and only if  $\gcd(k, N) = 1$.
\end{lemma}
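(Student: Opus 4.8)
The plan is to exhibit an explicit signal. Write $P(z)=\sum_{n=0}^{N-1}x_n z^n$ for the polynomial of a length-$N$ signal ${\bf x}$, so that $\tilde x_k = P(\zeta^k)$ where $\zeta=\e{2\pi\cu/N}$ is a primitive $N$th root of unity; throughout I work modulo $z^N-1$, which leaves values at $N$th roots of unity unchanged. Since $k\mapsto\zeta^k$ is a bijection from $\set{0,\dots,N-1}$ onto the $N$th roots of unity, and $\zeta^k$ is primitive exactly when $\gcd(k,N)=1$, the requirement ``$\tilde x_k\neq 0 \iff \gcd(k,N)=1$'' says precisely that $P$ must vanish at every non-primitive $N$th root of unity and at no primitive one. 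The minimal-degree polynomial with this zero set is $(z^N-1)/\Phi_N(z)=\prod_{m\divs N,\,m<N}\Phi_m(z)$, but its coefficients need not lie in $\set{-1,0,1}$ once $N$ has several prime factors, so instead I would define ${\bf x}$ by declaring its polynomial to be the reduction modulo $z^N-1$ of
\[
\prod_{p\divs N}\bigl(z^{N/p}-1\bigr),
\]
the product over the distinct prime divisors $p$ of $N$ (for $N=1$ the empty product gives ${\bf x}=(1)$).

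The verification splits into two independent parts. On the Fourier side, reduction modulo $z^N-1$ preserves values at $\zeta^k$, so
\[
\tilde x_k=\prod_{p\divs N}\bigl(\zeta^{kN/p}-1\bigr)=\prod_{p\divs N}\bigl(\e{2\pi\cu k/p}-1\bigr),
\]
and the factor indexed by $p$ vanishes exactly when $p\divs k$; hence $\tilde x_k=0$ iff some prime divisor of $N$ divides $k$, i.e.\ iff $\gcd(k,N)>1$, which is the claim. On the signal side, expanding the product gives
\[
\prod_{p\divs N}\bigl(z^{N/p}-1\bigr)=\sum_{T}(-1)^{\omega(N)-\abs{T}}z^{e_T},\qquad e_T=\sum_{p\in T}N/p,
\]
the sum over subsets $T$ of the prime divisors of $N$; so it suffices to show the $2^{\omega(N)}$ exponents $e_T$ are pairwise distinct modulo $N$, since then no two terms merge under reduction and every coefficient of $P$ is $0$ or $\pm1$. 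To see this, fix a prime $q\divs N$ and write $N=q^a m$ with $a\geq 1$ and $q\ndivs m$: then $N/p\equiv 0\pmod{q^a}$ for every prime $p\neq q$, while $N/q\equiv q^{a-1}m\pmod{q^a}$ is a nonzero multiple of $q^{a-1}$; hence $e_T\equiv q^{a-1}m\pmod{q^a}$ if $q\in T$ and $e_T\equiv 0\pmod{q^a}$ otherwise. Thus $T\neq T'$ forces $e_T\not\equiv e_{T'}\pmod{q^a}$, and therefore $e_T\not\equiv e_{T'}\pmod N$, for any prime $q$ in the symmetric difference of $T$ and $T'$.

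In the root-of-unity picture the construction all but writes itself, and no step is a genuine obstacle; the only subtlety is keeping the coefficients in $\set{-1,0,1}$, and the right move is precisely to resist collapsing to the minimal-degree polynomial $(z^N-1)/\Phi_N(z)$ and instead use the factored product $\prod_{p\divs N}(z^{N/p}-1)$, whose $2^{\omega(N)}$ terms have $q$-adically separated exponents and so cannot cancel upon reduction modulo $z^N-1$. With the two checks above, the displayed signal ${\bf x}$ witnesses the lemma.
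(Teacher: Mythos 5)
Your construction is in fact the same signal as the paper's (up to a global sign $(-1)^{\omega(N)}$): your exponents $e_T=\sum_{p\in T}N/p$ are exactly the paper's indices $n_T$, with value $(-1)^{\abs{T}}$ at each. What differs is the verification, and your route is genuinely distinct and, if anything, more self-contained. By presenting the signal as the reduction of $\prod_{p\divs N}(z^{N/p}-1)$ modulo $z^N-1$, you get the entire biconditional from the single factorized evaluation $\tilde x_k=\prod_{p\divs N}(\e{2\pi\cu k/p}-1)$: each factor vanishes iff $p\divs k$, so the product is zero iff $\gcd(k,N)>1$ and nonzero otherwise. The paper instead verifies vanishing by pairing the subsets $T$ and $T\cup\set{t'}$ inside the expanded DFT sum, and then must invoke \Cref{lem:dep1D} (hence the irreducibility of cyclotomic polynomials) to get the converse direction, i.e.\ nonvanishing at all $k$ coprime to $N$; your argument needs no such external input. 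Your distinctness check is also slightly sharper: the $q$-adic separation of the exponents proves the $e_T$ are pairwise distinct \emph{modulo $N$}, which is what is actually needed once indices are reduced into $\set{0,\dots,N-1}$ (the paper's ``different lowest-terms denominators'' argument does imply this, but only after noting that a difference of two reduced fractions with unequal denominators cannot be an integer). Both proofs are correct; yours trades the appeal to cyclotomic irreducibility for an elementary product evaluation, at the cost of introducing the polynomial formalism.
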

\begin{proof}
Let $N = \prod_{t = 1}^\omega p_t^{\alpha_t}$ be the prime factorization of $N$.
For each subset of indices $T \subseteq \set{1, \ldots, \omega}$, define $n_T = N\sum_{t \in T} \frac{1}{p_t} \bmod{N}$. 
We construct the vector ${\bf x}$ entrywise by
\begin{equation}
\label{eq:xn_def}
    x_n = \begin{cases}
        (-1)^{|T|} & \text{if } n=n_T \text{ for some } T \\
        0 & \text{else}
    \end{cases}.
\end{equation} 
We will first show that if $n_{T_1}=n_{T_2}$, then $T_1=T_2$, which proves that ${\bf x}$ is well defined.  

If $n_{T_1}=n_{T_2}$, then by definition, for some $j\in \Z$ we have 
\begin{equation}
\label{eq:nT1}
    N\sum_{t \in T_1} \frac{1}{p_t} - N\sum_{t \in T_2} \frac{1}{p_t} + jN = 0. 
\end{equation}
Suppose for contradiction that there exists a $t'\in T_1 \setminus T_2$. 
We then rewrite \cref{eq:nT1} as
\begin{equation}
\label{eq:nT2}
  \frac{N}{p_{t'}} + N\sum_{t \in T_1\setminus\{t'\}} \frac{1}{p_t} - N\sum_{t \in T_2} \frac{1}{p_t} + jN  = 0.   
\end{equation}
We observe that
$p_{t'}^{\alpha_{t'}}$ divides every term in \cref{eq:nT2} except the first term $N/p_{t'}$.   Hence the left hand side of \cref{eq:nT2} is not divisible by $p_{t'}^{\alpha_{t'}}$, contradicting that it can equal 0.  We can thus conclude that $T_1\subseteq T_2$, and by symmetry, we must have $T_1=T_2$.  Therefore ${\bf x}$ is well defined.
 
With this choice of ${\bf x}$, we can compute any DFT coefficient as
\begin{align}
\label{eq:lemma_dft}
     \tilde{x}_k
    = \sum_{n = 0}^{N - 1} x_{n} \e{-2\pi\cu nk/N} 
    = ~~~\sum_{\mathclap{T \subseteq \set{1, \ldots, \omega}}}x_{n_T} \e{-2\pi\cu n_Tk/N} 
    = ~~~\sum_{\mathclap{T \subseteq \set{1, \ldots, \omega}}} (-1)^{\abs{T}} \exp\left(-2\pi\cu k\sum_{t \in T}1/p_t\right) \ . 
\end{align}
If $\gcd(k, N) \ne 1$,
then there exists $1 \le t' \le \omega$ such that $p_{t'} \divs k$. 
Continuing from \cref{eq:lemma_dft}, 
we can further decompose the DFT summation over index sets based on membership of $t'$,
\begin{align}
    \tilde{x}_k 
    & = \sum_{T \not\ni t'} (-1)^{\abs{T}} 
    \left[\exp\left(-2\pi\cu k\sum_{t \in T}1/p_t\right) - \exp\left(-2\pi\cu k\left(\sum_{t \in T}1/p_t + 1/p_t'\right)\right)\right] \nonumber\\
    & = \sum_{T \not\ni t'} (-1)^{\abs{T}} 
    \left[\exp\left(-2\pi\cu k\sum_{t \in T}1/p_t\right) - \exp\left(-2\pi\cu k\sum_{t \in T}1/p_t\right)\exp\left(-2\pi \cu k/p_t'\right)\right] \ .
    \label{eq:sum_decomp}
\end{align}
This latter expression sums over all subsets $T$ of indices that do not contain index $t'$,
adding both the term corresponding to that subset $T$ and the term corresponding to the subset $T\cup\{t'\}$. 
As we have assumed that $p_{t'}\divs k$,
we have that $k/p_{t'}$ is an integer, making $\e{-2\pi\cu k/p_{t'}} = 1$. 
Therefore, the expression in \cref{eq:sum_decomp} reduces to
\begin{equation*}
    \tilde{x}_k = \sum_{T' \not\ni t'} (-1)^{\abs{T'}} 
    \left[\exp\left(-2\pi\cu k\sum_{t \in T'}1/p_t\right) - \exp\left(-2\pi\cu k\sum_{t \in T'}1/p_t\right)\right] = 0 \ ,
\end{equation*}
as desired.
For the reverse direction,
since ${\bf x}$ is nonzero, there exists $k$ such that $\tilde x_k \ne 0$.
Our previous work implies that $\gcd(k, N)$ must be 1.  Thus, \cref{lem:dep1D} implies that $\tilde x_{k'} \ne 0$ for each $k'$ such that $\gcd(k', N) = 1$.
\end{proof}
To clarify the construction outlined in the proof of \cref{lem:amb1D}, consider the following illustrative example.
For convenience, we adopt the compact notation $\uroot{N}{}$ for the $N$th primitive root of unity, $\uroot{N}{} = \e{-2\pi\cu/N}$.
This notation will be used in the rest of this work.
\begin{example} \label{ex:amb1D_6}
    Let $N = 6$. The prime factorization of 6 is $6 = p_1^1 \cdot p_2^1$, where $p_1 = 2$ and $p_2 = 3$.
    The power set of $\set{1, 2}$ is 
    \begin{equation*}
        \mathcal{P}(\set{1, 2}) = \set{\eset, \set{1}, \set{2}, \set{1, 2}} \ .
    \end{equation*}
    Computing all possible value of $n_T=6\sum_{i \in T} \frac{1}{p_i}$ yields,
    \begin{align*}
        n_\eset &= 0 \ ,
        &&&
        n_{\set{1}} &= 6 \cdot \frac{1}{p_1} = 3 \ ,
        \\
        n_{\set{2}} &= 6 \cdot \frac{1}{p_2} = 2 \ ,
        &&&
        n_{\set{1, 2}} &= 6 \cdot \frac{1}{p_1} + 6 \cdot \frac{1}{p_2} = 5 \ .
    \end{align*}
    This corresponds to the signal
    ${\bf x} = 
    \begin{bmatrix}
        1 & 0 & -1 & -1 & 0 & 1
    \end{bmatrix}$, 
    where the appropriate value of $\pm1$ is placed at the calculated indices.   
    Computing its DFT yields,
    \begin{equation*}
        \tilde{\bf x} = 
        \begin{bmatrix}
            0 & 2 - 2\uroot{3}{} & 0 & 0 & 0 & 2 - 2\uroot{3}{*}
        \end{bmatrix} \ ,
    \end{equation*}
    where we have made the simplification $\uroot{6}{2} = \uroot{3}{}$.
    Note that ${\bf x}$ exactly satisfies $\tilde{x}_k\neq 0$ for only $k=1$ and $k=5$, as desired.
\end{example}

Our current work generalizes the results of \cref{lem:dep1D,lem:amb1D} to two-dimensional integer signals of any size.
Limited previous results pertaining to similar problems are available for two dimensions.
For example, the results of \cite{levinson2023,pei2023binary} show that if $N_1 = N_2$ is prime, then an integer signal ${\tt X}$ may be recovered from a minimal set of $N + 2$ DFT coefficients.
This result will follow as a special case of our \cref{thm:dep2D,thm:amb2D}. 

Having established the main one-dimensional results, we conclude this section with a key property of the DFT of frequency domain decimated signals. This result is independent of the integer constraints and serves as the main tool for 
developing the efficient inversion algorithms presented in \cref{sec:algs}.
We begin by introducing the relevant definition.


\NiceMatrixOptions{xdots/horizontal-labels}
 \tikzset
   {
     nicematrix/brace/.style =
       {
         decoration =
           {
             calligraphic brace ,
             amplitude = 0.4 em ,
             raise = -0.15 em
},
line width = 0.1 em , decorate ,
} }

\begin{definition} \label{def:dec}
    Let ${\bf x}$ be a signal of length $N$.
    If $d \divs N$, then the frequency decimated signal ${\bf x}^{(N/d)}$ of length $N/d$ is defined entrywise by
    \begin{equation*}
        x^{(N/d)}_m = \sum_{n = 0}^{d - 1}x_{m + nN/d},
        \qquad \text{for}~0\le m < \frac{N}{d}.
    \end{equation*}
\end{definition}
\noindent
The process of constructing ${\bf x}^{(N/d)}$ from ${\bf x}$ corresponds to decimating by a factor of $d$ in the frequency domain. 
The following brief lemma justifies this terminology, 
as the decimated signal retains every $d$th DFT coefficient of ${\bf x}$.  
\begin{lemma} \label{lem:dec_in_freq}
    Let ${\bf x}$ be a signal of length $N$.
    If $d$ divides $N$, then for $k=k'd$,  $\tilde{x}_k = \tilde{x}_{k'}^{(N/d)}$.
\end{lemma}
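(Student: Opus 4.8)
The plan is to prove the identity by a direct expansion of the left-hand side followed by a re-indexing of the resulting double sum. Write $M = N/d$ for the length of the decimated signal. Applying the length-$M$ DFT to ${\bf x}^{(M)}$ and substituting \Cref{def:dec} gives
\begin{equation*}
    \tilde{x}^{(M)}_{k'} = \sum_{m=0}^{M-1} x^{(M)}_m\, \uroot{M}{mk'} = \sum_{m=0}^{M-1}\sum_{n=0}^{d-1} x_{m+nM}\, \uroot{M}{mk'} \ .
\end{equation*}
The key step is the change of index $j = m + nM$: by the division algorithm, the map $(m,n)\mapsto m+nM$ is a bijection from $\set{0,\dots,M-1}\times\set{0,\dots,d-1}$ onto $\set{0,\dots,N-1}$, and it satisfies $m \equiv j \pmod M$. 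Because $\uroot{M}{M} = \e{-2\pi\cu} = 1$, the factor $\uroot{M}{mk'}$ depends only on $m$ modulo $M$ and therefore equals $\uroot{M}{jk'}$, so the double sum collapses to $\sum_{j=0}^{N-1} x_j\, \uroot{M}{jk'}$.

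To finish, I would observe that $\uroot{M}{k'} = \e{-2\pi\cu k'/M} = \e{-2\pi\cu k'd/N} = \uroot{N}{k'd} = \uroot{N}{k}$, whence
\begin{equation*}
    \tilde{x}^{(M)}_{k'} = \sum_{j=0}^{N-1} x_j\, \uroot{M}{jk'} = \sum_{j=0}^{N-1} x_j\, \uroot{N}{jk} = \tilde{x}_k \ ,
\end{equation*}
which is the claim. There is essentially no obstacle here; the only points requiring care are verifying that $(m,n)\mapsto m+nM$ really is a bijection onto $\set{0,\dots,N-1}$, and recognizing the aliasing identity $\uroot{M}{M}=1$, which is precisely what makes periodization in the time domain (the folding in \Cref{def:dec}) correspond to decimation in the frequency domain. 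One may also remark that although the statement fixes a representative $k'$, both sides are defined for every integer $k'$ by periodicity of the two transforms — $\tilde{x}^{(M)}$ has period $M$ and $\tilde{x}$ has period $N$, and $k' \mapsto k' + M$ sends $k = k'd \mapsto k + N$ — so the identity extends consistently to all $k'$.
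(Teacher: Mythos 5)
Your proof is correct and is essentially the paper's own argument run in reverse: the paper expands $\tilde{x}_{dk'}$, reduces $\uroot{N}{nk'd}$ to $\uroot{N/d}{nk'}$, and groups the time indices by residue class modulo $N/d$ to recognize ${\bf x}^{(N/d)}$, while you expand $\tilde{x}^{(N/d)}_{k'}$ and unfold the same grouping via the bijection $(m,n)\mapsto m+nM$. Both hinge on the identical facts (periodicity $\uroot{M}{M}=1$ and $\uroot{M}{k'}=\uroot{N}{k'd}$), so there is nothing substantively different to flag.
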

\begin{proof}
Let $k = dk'$. 
Then the $k$th DFT coefficient can be written as 
\begin{equation*}
    \tilde{x}_{dk'} 
    = \sum_{n = 0}^{N - 1} x_n \uroot{N}{nk'd} 
    = \sum_{n = 0}^{N - 1} x_n \uroot{N/d}{nk'} 
    = \sum_{m = 0}^{N/d - 1} 
    \bigg[ \sumstack{n}{n=m}{N/d}{x_n} \bigg] \uroot{N/d}{mk'} 
    = \sum_{m = 0}^{N/d - 1} x_m^{(N/d)} \uroot{N/d}{mk'} ,
\end{equation*}
which shows that $\tilde{x}_{k} = \tilde{x}^{(N/d)}_{k'}$. 
\end{proof}
The signal ${\bf x}^{(N/d)}$ may be alternatively defined by the matrix equation,
\begin{equation}
\label{eq:matrix_form}
    {\bf x}^{(N/d)} = 
    \begin{bmatrix}
        \idmat{N/d} & \cdots & \idmat{N/d}
    \end{bmatrix} 
    {\bf x} \ , 
\end{equation}
where the $(N/d) \times N$ matrix consists of $d$ blocks of the $(N/d) \times (N/d)$ identity matrix.
\Cref{lem:dec_in_freq} implies that if $\begin{bmatrix}
    \idmat{N/d} & \cdots & \idmat{N/d}
\end{bmatrix} 
{\bf y} = {\bf x}^{(N/d)}$ for some other signal $\mathbf{y}$, then it must hold that $\tilde{y}_d=\tilde{x}_d$.
This consequence of \cref{lem:dec_in_freq} will be used later to optimize the reconstruction algorithms.

\section{Uniqueness Results}
\label{sec:theory}

We now develop the tools needed to address the uniqueness question of \cref{prob:uniq}.
We begin by introducing an equivalence relation on the DFT coefficients of integer matrices.
Sampling at least one DFT coefficient from each equivalence class will be sufficient to ensure the uniqueness of recovery. 
Next, we show that this sampling strategy is optimal in the sense that if any equivalence class is not sampled, then there exist distinct integer matrices that cannot be distinguished apart from each other. 
We conclude with results that explicitly count the minimum number of DFT coefficients required for uniqueness.

We note that constructions related to our \cref{def:subsig} appear in several works, including as splitting signals \cite{grigoryan-superposition,grigoryan-book} and discrete variants of the Radon transform \cite{gertner2002new, hsung1996discrete,kingston2007generalised}.  This definition plays a key role later on in breaking down the two-dimensional reconstruction problem into a carefully chosen sequence of one-dimensional problems.  Although this general idea is used in these other works, none address the underdetermined reconstruction problem considered in this paper.

\subsection{DFT Coefficient Equivalence}

Let ${\tt X}$ be an $N_1 \times N_2$ matrix and let $N = \lcm(N_1, N_2)$.
For every $0 \le k < N_1$ and $0 \le l < N_2$, finding the least common denominator yields,
\begin{equation*}
    \frac{mk}{N_1} + \frac{nl}{N_2} = \frac{mkN_2' + nlN_1'}{N}  \ ,
\end{equation*}
where $N_1' = N_1/\gcd(N_1,N_2)$ and $N_2' = N_2/\gcd(N_1,N_2)$.  This directly implies that 
$\uroot{N_1}{mk}\uroot{N_2}{nl} = \uroot{N}{mkN_2'+nkN_1'}$.
Thus, we may express the DFT coefficient $\tilde{X}_{kl}$ as a one-dimensional DFT, 
\begin{equation*} 
    \tilde{X}_{kl} 
    = \sum_{m = 0}^{N_1 - 1}\sum_{n = 0}^{N_2 - 1} {X}_{mn} \uroot{N_1}{mk}\uroot{N_2}{nl}
    = \sum_{m = 0}^{N_1 - 1}\sum_{n = 0}^{N_2 - 1} {X}_{mn} \uroot{N}{mkN_2' + nlN_1'}
    = \sum_{j = 0}^{N - 1} \bigg [\sumstack{m, n}{mkN_2' + nlN_1' = j}{N}{X_{mn}} \bigg] \uroot{N}{j} \ .
\end{equation*}
Additionally, 
for every $0 \le \lambda < N$, 
the DFT coefficient $\tilde{X}_{\lambda k,\lambda l}$ takes the form
\begin{equation}
\label{eq:recto1D_lambda}
    \tilde{X}_{\lambda k, \lambda l} = 
    \sum_{j = 0}^{N - 1} y_j^{(k,l)}\uroot{N}{\lambda j} \ ,
    \qquad\qquad
     y_j^{(k,l)} = 
    \sumstack{m, n}{mkN_2' + nlN_1' = j}{N}{X_{mn}} ,
    \qquad 
    \text{for}~0 \le j < N \ ,
\end{equation}
so
$\tilde{X}_{\lambda k, \lambda l}$ is the $\lambda$th DFT coefficient of the signal 
$\mathbf{y}^{(k, l)} = \begin{bmatrix}
    y_0^{(k, l)} & \cdots & y_{N - 1}^{(k, l)}
\end{bmatrix}$. 
Recall that the length of this signal is $N=\lcm(N_1,N_2)$.  
However, 
in general, 
the sum in \cref{eq:recto1D_lambda} may be empty for some choices of $j$ as there may be no $m$ and $n$ that satisfy the congruence condition. 
We will reduce the length of the vector ${\bf y}$ by discarding the empty sums, 
which carry no information about ${\tt X}$ since they are zero for any matrix.

First, 
note that when $\gcd(k, N_1) \ne 1$,
$\uroot{N_1}{k}$ is not a primitive $N_1$th root of unity.
To account for this, define $D_1$, $D_2$, and $D$ by
\begin{equation}
\label{eq:D_notation}
  D_1 = N_1/\gcd(k, N_1) \qquad , \qquad  D_2 = N_2/\gcd(l, N_2) \qquad , \qquad D = \lcm(D_1, D_2) \ . 
\end{equation}
By construction, $\uroot{N_1}{k}$ and $\uroot{N_2}{l}$ are primitive $D_1$th and $D_2$th roots of unity,
respectively.
Thus,
every root of unity $\uroot{N_1}{mk}\uroot{N_2}{nl}$ appearing in the DFT equation for $\tilde{X}_{kl}$ can be expressed as $D$th root of unity.
Therefore, if the congruence $mkN_2'+nlN_1'=j \pmod N$ has a solution, then $\uroot{N}{j}$ must be a $D$th root of unity.
As $\uroot{N}{j}$ is a $D$th root of unity only if $d = N/D$ satisfies $d\divs j$,
the signal ${\bf x}^{(k, l)}$ defined by $x^{(k, l)}_j = y^{(k, l)}_{jd}$ for $0 \le j < D$ contains every entry of ${\bf y}^{(k, l)}$ that is not identically zero.
We can conclude that 
\begin{equation}
\label{eq:X_lambdak}
    \tilde{X}_{\lambda k,\lambda l} = \tilde{x}_\lambda^{(k, l)} ,
\end{equation}
since the summation in \cref{eq:recto1D_lambda} may be restricted to the $D$ indices $j$ that are multiples of $d$.
This structure motivates the following definition, 
which allows us to apply the techniques for one-dimensional integer signals to integer matrices.

\begin{definition}[Subsignal] \label{def:subsig}
    Let ${\tt X}$ be an $N_1 \times N_2$ integer matrix. 
    Let $N_1' = N_1/\gcd(N_1, N_2)$, $N_2' = N_2/\gcd(N_1, N_2)$ and $N = \lcm(N_1, N_2)$.
    For any $k$ and $l$, we define the $(k, l)$-subsignal ${\bf x}^{(k,l)}$ of ${\tt X}$ entrywise by,
    \begin{equation*} 
        x_{j}^{(k, l)} = 
        \sumstack{m,n}{mkN_2' + nlN_1' = jd}{N}{X_{mn}\ },
        \qquad
        \text{for}~0 \le j < D \ ,
    \end{equation*}
    where $D$ is defined as in \cref{eq:D_notation} and $d = N/D$.
\end{definition}

As the $(k, l)$-subsignal is a one-dimensional integer signal of length $D$, \cref{lem:dep1D} states that if $\tilde x^{(k,l)}_\lambda = 0$, then $\tilde x^{(k,l)}_{\lambda'} = 0$ for all $\lambda'$ such that $\gcd(\lambda', D) = \gcd(\lambda,D)$. 
Therefore, if $\tilde{x}^{(k,l)}_1=\tilde{X}_{kl}$ is zero, then for all $\lambda$ such that $\gcd(\lambda,D)=1$, we have $\tilde{x}^{(k,l)}_\lambda=\tilde{X}_{\lambda k,\lambda l}=0$. 
This implies that if we sample $\tilde{X}_{kl}$, then there is no additional information provided by any $\tilde{X}_{\lambda k,\lambda l}$ with $\gcd(\lambda,D)=1$,
which proves the ensuing \cref{thm:dep2D}. 
Before stating the theorem, we first express the relationship between dependent coefficients as an equivalence relation to simplify its formulation.

\begin{definition} \label{def:dep2D}
    Let ${\tt X}$ be an $N_1 \times N_2$ integer matrix.
    Two DFT coefficients $\tilde{X}_{k l}$ and $\tilde{X}_{k' l'}$ are equivalent, denoted by $(k, l) \sim (k', l')$, if there exists a nonzero integer $\lambda$ that is relatively prime with $D$ satisfying 
    \begin{equation*}
        k' = \lambda k \pmod{N_1} \qquad \text{and} \qquad l' = \lambda l \pmod{N_2} \\ .
    \end{equation*}
\end{definition}

To verify that the equivalence relation $\sim$ is symmetric, fix $k$ and define $D_1$ as in \cref{eq:D_notation}.
Suppose $k' = \lambda k \pmod{N_1}$ with $\gcd(\lambda, D) = 1$, as in \cref{def:dep2D}. 
As $\lambda$ is relatively prime with $D$, $\lambda^{-1}$ exists $\pmod{D}$.
Since $D_1 \divs D$, $\lambda^{-1}$ is also the multiplicative inverse of $\lambda$ modulo $D_1$. 
Let $d_1 = \gcd(k, N_1)$.
Note that we must have $d_1 \divs k'$, which justifies that 
\begin{align*}
    k' = \lambda k \pmod{N_1} 
    & \iff (k'/d_1) = \lambda (k/d_1 )\pmod{N_1/d_1} \\
    & \iff (k'/d_1) = \lambda (k/d_1 ) \pmod{D_1} \\
    & \iff \lambda^{-1}(k'/d_1) = (k/d_1 ) \pmod{D_1} \\
    & \iff \lambda^{-1}k' = k \pmod{N_1} \ .
\end{align*}
Performing an identical computation for $l,l'$ shows that $l = \lambda^{-1}l' \pmod{N_2}$, so $(k', l') \sim (k, l)$.

We can now state the main result of this section.
\begin{theorem} \label{thm:dep2D}
    Let ${\tt X}$ be an $N_1 \times N_2$ integer matrix.
    For every $0 \le k < N_1, 0 \le l < N_2$, and every frequency $(k', l') \sim (k, l)$,
    $\tilde{\tt X}_{kl} = 0$ if and only if $ \tilde{\tt X}_{k'l'} = 0$.
\end{theorem}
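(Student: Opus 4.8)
The plan is to reduce the two-dimensional statement to the one-dimensional \Cref{lem:dep1D} via the subsignal construction already developed in \Cref{def:subsig} and the identity in \Cref{eq:recto1Dlambda_new}. Concretely, given $(k,l)$, form the $(k,l)$-subsignal ${\bf x}^{(k,l)}$, which by \Cref{def:subsig} is a one-dimensional integer signal of length $D = \lcm(D_1, D_2)$, and recall from \Cref{eq:recto1Dlambda_new} that $\tilde{X}_{\lambda k, \lambda l} = \tilde{x}^{(k,l)}_\lambda$ for every integer $\lambda$. In particular $\tilde{X}_{kl} = \tilde{x}^{(k,l)}_1$.

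The main argument is then: if $\tilde{X}_{kl} = \tilde{x}^{(k,l)}_1 = 0$, then since $\gcd(1, D) = 1$, \Cref{lem:dep1D} (applied to the integer signal ${\bf x}^{(k,l)}$ of length $D$) yields $\tilde{x}^{(k,l)}_\lambda = 0$ for every $\lambda$ with $\gcd(\lambda, D) = 1$. By \Cref{def:dep2D}, any $(k', l') \sim (k,l)$ is of the form $k' = \lambda k \pmod{N_1}$, $l' = \lambda l \pmod{N_2}$ for some such $\lambda$; combining this with the periodicity $\tilde{X}_{k,l} = \tilde{X}_{k+sN_1, l+tN_2}$ and \Cref{eq:recto1Dlambda_new} gives $\tilde{X}_{k'l'} = \tilde{X}_{\lambda k, \lambda l} = \tilde{x}^{(k,l)}_\lambda = 0$. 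For the converse, I would invoke symmetry of $\sim$, which was verified in the paragraph immediately preceding the theorem: if $(k',l') \sim (k,l)$ then $(k,l) \sim (k',l')$, so the roles of the two frequencies can be interchanged and $\tilde{X}_{k'l'} = 0 \implies \tilde{X}_{kl} = 0$ follows by the same reasoning applied to the $(k',l')$-subsignal.

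There is one bookkeeping subtlety worth being careful about: to apply \Cref{eq:recto1Dlambda_new} at the index $(k',l')$ I need $\tilde{X}_{k'l'}$ to literally equal $\tilde{X}_{\lambda k, \lambda l}$, which uses that the DFT coefficients depend only on $k \bmod N_1$ and $l \bmod N_2$ — this is exactly the periodicity noted right after \Cref{eq:DFT_2D}, so it is immediate, but it should be stated. A second point is that $D$ is defined in terms of $\gcd(k,N_1)$ and $\gcd(l,N_2)$, so one should double-check that $D$ is genuinely the length relevant to both $\lambda = 1$ and the general $\lambda$ coprime to $D$; this is already guaranteed because the subsignal and its length $D$ were fixed once $(k,l)$ was chosen, and $\lambda$ ranges over residues mod $D$.

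I do not expect a serious obstacle here — the theorem is essentially a repackaging of \Cref{lem:dep1D} through the machinery of \Cref{sec:theory} up to this point. The only mild care needed is to make the logical flow clean: fix $(k,l)$, build the subsignal, quote \Cref{eq:recto1Dlambda_new}, apply \Cref{lem:dep1D} with $d = 1$ in its statement (i.e. $\gcd(\lambda, D) = \gcd(1, D) = 1$), translate back through \Cref{def:dep2D}, and close the biconditional using the already-established symmetry of $\sim$. The proof should be only a few lines.
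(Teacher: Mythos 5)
Your proposal is correct and follows essentially the same route as the paper: reduce to the $(k,l)$-subsignal, use \Cref{eq:recto1Dlambda_new} to identify $\tilde{X}_{\lambda k,\lambda l}$ with $\tilde{x}^{(k,l)}_\lambda$, and apply \Cref{lem:dep1D} with $\gcd(\lambda,D)=1$. The only cosmetic difference is that the converse direction can be obtained directly by applying \Cref{lem:dep1D} within the same subsignal starting from $\lambda$ (since $\gcd(\lambda,D)=\gcd(1,D)=1$), rather than passing to the $(k',l')$-subsignal via symmetry of $\sim$, but both are valid.
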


\Cref{thm:dep2D} provides an upper bound for \cref{prob:uniq}.
It implies that a single representative from each equivalence class in $\tilde{\tt X}/\sim$ forms a sufficient set of given DFT coefficients for unique recovery of an integer matrix ${\tt X}$.
Therefore, the number of DFT coefficients required for unique recovery is at most the number of elements of $\tilde{\tt X}/\sim$.

The next section will focus on proving that this upper bound provided by \cref{thm:dep2D} is tight.
To do so, for any given frequency $(k, l)$, we produce a pair of integer $N_1 \times N_2$ matrices ${\tt X}$ and ${\tt Y}$ such that $\tilde{\tt X}$ and $\tilde{\tt Y}$ differ only at frequencies $(k', l') \sim (k, l)$, and are equal at all other frequencies.
This shows that a representative of the equivalence class $(k, l)$ is required to uniquely reconstruct an integer matrix.

\subsection{Minimal DFT Sampling} 
Our next result shows that the equation which determines the $j$th entry of the $(k, l)$-subsignal,
\begin{equation}
    \label{eq:to_satisfy}
    mkN_2' + nlN_1' = jd \pmod{N} \ ,
\end{equation}
evenly partitions the entries of ${\tt X}$.  The proof of \cref{lem:subsig} relies on the underlying algebraic structure of the partition in \cref{eq:to_satisfy}. For readability, we defer the proof to \apref{ap:subsig}.

\begin{lemma} \label{lem:subsig}
    Let ${\tt X}$ be an $N_1 \times N_2$ integer matrix.
    For any frequency $(k, l)$, each entry $x_{j}^{(k, l)}$ of the $(k, l)$-subsignal is a sum of $M = N_1N_2/D$ entries of ${\tt X}$,
    where $D$ is defined as in \cref{eq:D_notation}.
\end{lemma}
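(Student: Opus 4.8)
The claim is that for each fixed $j$, the number of pairs $(m,n)$ with $0 \le m < N_1$, $0 \le n < N_2$ satisfying the congruence $mkN_2' + nlN_1' \equiv jd \pmod{N}$ is exactly $M = N_1 N_2 / D$. The plan is to count solutions of a single linear congruence in two variables modulo $N$, then check the count is independent of $j$ (over the non-empty residues, which we already know are exactly the multiples of $d$). First I would reduce to a statement about the additive subgroup of $\mathbb{Z}_N$ generated by the two coefficients $a := kN_2'$ and $b := lN_1'$. The set of achievable right-hand sides $\{am + bn \bmod N : 0 \le m < N_1,\ 0 \le n < N_2\}$ and the multiplicities with which each value is hit are what we need. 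Since $m$ ranges over a complete residue system mod $N_1$ and $N_1 \mid N$ — wait, that is false in general; rather $N_1 \mid N$ does hold since $N = \operatorname{lcm}(N_1,N_2)$, so as $m$ runs $0,\dots,N_1-1$, the residue $am \bmod N$ takes each value in the subgroup $\langle a \bmod N\rangle$ a fixed number of times, namely $N_1$ divided by the order of $a$ in $\mathbb{Z}_N$. The same for $n$ and $b$.

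Concretely, I would proceed as follows. Let $g = \gcd(a, b, N)$. Standard theory of linear congruences says $am + bn \equiv c \pmod N$ has solutions $(m,n)$ with $m,n$ ranging over full residue systems mod $N$ iff $g \mid c$, and then the number of such solutions is $gN$ (for each of the $N$ choices of $m$, the congruence $bn \equiv c - am \pmod N$ either has no solution or exactly $\gcd(b,N)$ solutions; summing over $m$ gives $gN$). But our variables range only over $0 \le m < N_1$ and $0 \le n < N_2$, not $0 \le m,n < N$. The cleanest fix: observe that increasing $m$ by $N_1$ changes $am = kN_2' m$ by $kN_2'N_1 = kN_1N_2/\gcd = kN$, hence by $0 \bmod N$; likewise $n \mapsto n + N_2$ is invisible. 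So the map $(m,n) \mapsto am + bn \bmod N$ descends from $\mathbb{Z}_N \times \mathbb{Z}_N$ to $\mathbb{Z}_{N_1} \times \mathbb{Z}_{N_2}$, and the fiber over any achievable $c$ in the restricted domain is exactly $\frac{N_1 N_2}{N^2}$ times the fiber $gN$ in the full domain — this ratio is $\frac{N_1 N_2}{N^2}\cdot gN = \frac{gN_1N_2}{N}$. So the count is $gN_1N_2/N$, and it remains to show $g N / ? $ — that is, to identify $N/g$ with $D = \operatorname{lcm}(D_1, D_2)$. I would verify $\gcd(kN_2', N) = N_1'\gcd(k, N_1) \cdot$ (something); more carefully, $\gcd(kN_2', lN_1', N)$. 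Writing everything in terms of $\gcd(N_1,N_2)$, one gets $N/\gcd(kN_2',lN_1',N) = \operatorname{lcm}(N_1/\gcd(k,N_1),\ N_2/\gcd(l,N_2)) = D$, so $g = N/D$ and the fiber count is $\frac{(N/D)N_1N_2}{N} = N_1N_2/D = M$.

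The main obstacle is the number-theoretic identity $\gcd(kN_2',\, lN_1',\, N) = N/D$ with $D = \operatorname{lcm}(N_1/\gcd(k,N_1),\, N_2/\gcd(l,N_2))$; this requires a careful prime-by-prime (or $\gcd$/$\operatorname{lcm}$-manipulation) argument and is where all the bookkeeping lives. A secondary subtlety is justifying the "$\frac{N_1N_2}{N^2}$ times" fiber-scaling rigorously: the descended map $\mathbb{Z}_{N_1}\times\mathbb{Z}_{N_2} \to \mathbb{Z}_N$ has domain of size $N_1N_2$, not $N^2$, so one cannot literally scale; instead I would argue directly that over the restricted domain each achievable value $c$ (necessarily a multiple of $d = N/D$) is attained exactly $N_1N_2/D$ times, by fixing $m \in \{0,\dots,N_1-1\}$ and counting $n \in \{0,\dots,N_2-1\}$ with $lN_1' n \equiv jd - kN_2' m \pmod N$ — this inner count is either $0$ or $\gcd(lN_1', N_2\cdot) $, and a parity/solvability count over the $N_1$ values of $m$ yields the total $M$. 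Once the identity $g=N/D$ is in hand, independence of $j$ is automatic since all non-empty fibers have the same cardinality $gN_1N_2/N$. Alternatively — and perhaps more in the spirit of the paper — one can avoid raw congruence-counting entirely: the $D$ entries of ${\bf x}^{(k,l)}$ partition the $N_1N_2$ entries of ${\tt X}$ (every entry $X_{mn}$ lands in exactly one subsignal entry, namely $j \equiv (mkN_2'+nlN_1')/d$), so the average block size is $N_1N_2/D$; it then suffices to show all blocks have equal size, which follows because translating the solution set of \eqref{eq:to_satisfy} for index $j$ by any fixed solution of the $j'=j+1$ case's difference gives a bijection between the block for $j$ and the block for $j'$ whenever both are non-empty — and all of $j=0,\dots,D-1$ are non-empty by the discussion preceding \Cref{def:subsig}. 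I would likely present this second, cleaner argument as the main proof and relegate the congruence count to a remark.
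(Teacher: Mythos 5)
Your counting framework is essentially the paper's: the paper shows the solution set of \Cref{eq:to_satisfy} for each $j$ is a coset of the subgroup $H=\set{(m,n)\in\mathbb{Z}_{N_1}\times\mathbb{Z}_{N_2} : mkN_2'+nlN_1'\equiv 0 \pmod N}$, so all non-empty blocks share the size $\abs{H}$ and, once every $j$ is known to be attained, there are $D$ of them, giving $\abs{H}=N_1N_2/D$. Your ``translation bijection between blocks'' and your fiber count for the homomorphism $(m,n)\mapsto mkN_2'+nlN_1'$ are the same argument in different clothing. Your route-1 identity $\gcd(kN_2',lN_1',N)=N/D$ is also correct: it reduces to $\gcd\bigl(N_2'\gcd(k,N_1),\,N_1'\gcd(l,N_2)\bigr)=N/D$, which is exactly the identity the paper derives before invoking B\'ezout, and with it the congruence count goes through.

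The gap is in the version you say you would actually present. You assert that all $D$ blocks $j=0,\dots,D-1$ are non-empty ``by the discussion preceding \Cref{def:subsig}.'' That discussion proves only the complementary statement: via \Cref{lem:rep_in_freq} it shows the sum is empty whenever $j$ is \emph{not} a multiple of $d$; it does not show that every multiple of $d$ is attained. A priori the image of $(m,n)\mapsto mkN_2'+nlN_1'$ could be a proper subgroup of $d\mathbb{Z}_N$, in which case some blocks would be empty and the non-empty ones would be strictly larger than $N_1N_2/D$, so the ``average block size'' step does not close. The translation bijection has the same problem: to pass from block $j$ to block $j+1$ you need a solution of $mkN_2'+nlN_1'\equiv d \pmod N$, i.e., non-emptiness of the $j=1$ block, which is precisely the existence statement at issue. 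Supplying it is the entire first half of the paper's proof (the computation $d=\gcd(N_2'\gcd(k,N_1),\,N_1'\gcd(l,N_2))$ followed by B\'ezout to produce explicit $(m_j,n_j)$), and it is equivalent to your route-1 gcd identity --- so your proposal contains the needed ingredient, but the write-up you prefer sidesteps it instead of using it.
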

The next lemma uses \cref{lem:subsig} to generalize \cref{lem:amb1D} to the two-dimensional setting. 
For any fixed frequency $(k, l)$,
we can construct a matrix ${\tt X}$ with entries in $\set{-1, 0, 1}$ such that only the DFT coefficients at frequencies equivalent to $(k, l)$ are potentially nonzero.
This is accomplished by making the $(k, l)$ subsignal of ${\tt X}$ proportional to the signal in \cref{lem:amb1D}.
Moreover, 
every DFT frequency $(k', l')$ that does not appear in the $(k, l)$-subsignal must be 0.

\begin{lemma} \label{lem:amb2D}
    Let $N_1$ and $N_2$ be any positive integers.
    Fix $(k, l)$ and define $D$ as in \cref{eq:D_notation}.
    Let ${\bf x}$ be the signal of length $D$ defined in \cref{eq:xn_def} from \cref{lem:amb1D}.
    For the matrix ${\tt X}$ defined by \begin{equation}
        \label{eq:lemma3.5}
        X_{mn} = x_j,
        \qquad \text{for}\quad
        mkN_2' + nlN_1' = jd \pmod{N}\ ,
    \end{equation}
    the DFT coefficients satisfy
    $\tilde{X}_{k', l'} \ne 0 $ if and only if $(k', l') \sim (k, l)$.
\end{lemma}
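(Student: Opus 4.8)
The plan is to realize ${\tt X}$ as a ``pullback'' of the one-dimensional signal ${\bf x}$ of \Cref{lem:amb1D} along the group homomorphism $\psi\colon\mathbb{Z}_{N_1}\times\mathbb{Z}_{N_2}\to\mathbb{Z}_N$, $\psi(m,n)=(mkN_2'+nlN_1')\bmod N$, and then to read the DFT coefficients of ${\tt X}$ off of those of ${\bf x}$. The proof of \Cref{lem:subsig} already supplies the structural facts I need about $\psi$: its image is the subgroup $d\mathbb{Z}_N$ of order $D$, so every value $\psi(m,n)$ is a multiple of $d$ and ${\tt X}$ from \Cref{eq:lemma3.5} is well defined by $X_{mn}=x_{\psi(m,n)/d}$; its kernel $H=\ker\psi$ is a subgroup of order $|H|=N_1N_2/D=M$; and each fiber $\psi^{-1}(jd)$ is a single coset of $H$ with $M$ elements. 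Combining these with \Cref{def:subsig}, every entry of ${\tt X}$ summed into $x_j^{(k,l)}$ equals $x_j$ and there are $M$ of them, so the $(k,l)$-subsignal of this ${\tt X}$ is exactly ${\bf x}^{(k,l)}=M{\bf x}$, and hence $\tilde{x}^{(k,l)}_\lambda=M\tilde{x}_\lambda$ for all $\lambda$.

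I would then handle every frequency lying on the cyclic subgroup $\langle(k,l)\rangle\le\mathbb{Z}_{N_1}\times\mathbb{Z}_{N_2}$, which has order $D=\lcm(D_1,D_2)$. Such a frequency can be written $(k',l')=(\lambda k\bmod N_1,\,\lambda l\bmod N_2)$ with $\lambda$ unique modulo $D$, and periodicity of the DFT together with \Cref{eq:recto1Dlambda_new} gives $\tilde{X}_{k'l'}=\tilde{X}_{\lambda k,\lambda l}=\tilde{x}^{(k,l)}_\lambda=M\tilde{x}_\lambda$. By \Cref{lem:amb1D} applied with modulus $D$, this is nonzero precisely when $\gcd(\lambda,D)=1$, which by \Cref{def:dep2D} is precisely the condition $(k',l')\sim(k,l)$. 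This settles the statement for all $(k',l')\in\langle(k,l)\rangle$, and in particular shows $\tilde{X}_{kl}=M\tilde{x}_1\ne0$.

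It remains to treat $(k',l')\notin\langle(k,l)\rangle$, where we must show $\tilde{X}_{k'l'}=0$; note that such $(k',l')$ is automatically not equivalent to $(k,l)$, since the equivalence class of $(k,l)$ is contained in $\langle(k,l)\rangle$ by \Cref{def:dep2D}. Grouping the DFT sum by the value $j=\psi(m,n)/d$ and writing each fiber as a coset of $H$ with representative $(m_j,n_j)$, I get $\tilde{X}_{k'l'}=\sum_{j=0}^{D-1}x_j\,\chi(m_j,n_j)\sum_{h\in H}\chi(h)$, where $\chi(m,n)=\uroot{N_1}{mk'}\uroot{N_2}{nl'}$ is a character of $\mathbb{Z}_{N_1}\times\mathbb{Z}_{N_2}$. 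By character orthogonality, $\sum_{h\in H}\chi(h)$ equals $|H|$ if $\chi|_H\equiv1$ and $0$ otherwise, so it is enough to rule out $\chi|_H\equiv1$. If $\chi|_H\equiv1$, then $\chi$ is constant on the cosets of $H=\ker\psi$, hence factors as $\chi(m,n)=\rho(\psi(m,n))$ for a character $\rho$ of the cyclic group $d\mathbb{Z}_N\cong\mathbb{Z}_D$; writing $\rho(d)=\uroot{D}{\lambda}$ yields $\chi(m,n)=\uroot{N}{\lambda\psi(m,n)}=\uroot{N_1}{\lambda mk}\uroot{N_2}{\lambda nl}$ for all $(m,n)$, and evaluating at $(1,0)$ and $(0,1)$ forces $k'\equiv\lambda k\pmod{N_1}$ and $l'\equiv\lambda l\pmod{N_2}$, that is, $(k',l')\in\langle(k,l)\rangle$ --- a contradiction. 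Hence $\chi|_H\not\equiv1$ and $\tilde{X}_{k'l'}=0$.

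I expect the main obstacle to be the last paragraph, and within it the observation that any character trivial on $H=\ker\psi$ must be a power of the ``canonical'' character $(m,n)\mapsto\uroot{N}{\psi(m,n)}$ --- this is what pins the frequency $(k',l')$ back onto $\langle(k,l)\rangle$ and rules out the non-vanishing case. Everything else reduces to \Cref{lem:subsig}, \Cref{lem:amb1D}, and a routine character-sum computation.
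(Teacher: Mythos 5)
Your proposal is correct, and the first half coincides with the paper's argument: both establish via \Cref{lem:subsig} that the $(k,l)$-subsignal of the constructed ${\tt X}$ equals $M{\bf x}$, so that $\tilde{X}_{\lambda k,\lambda l}=M\tilde{x}_\lambda$ and \Cref{lem:amb1D} settles every frequency on the orbit $\langle(k,l)\rangle$. Where you genuinely diverge is in showing $\tilde{X}_{k'l'}=0$ off that orbit. The paper does this globally with an energy argument: it computes $\norm{{\tt X}}_F^2=M\norm{{\bf x}}_2^2$, applies the discrete Parseval relation to get $\norm{\tilde{\tt X}}_F^2=M^2\norm{\tilde{\bf x}}_2^2=\sum_{\lambda=0}^{D-1}\abs{\tilde{X}_{\lambda k,\lambda l}}^2$, and concludes that all spectral energy is concentrated on the orbit, forcing every other coefficient to vanish. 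You instead argue locally: grouping the DFT sum over the fibers of $\psi(m,n)=mkN_2'+nlN_1'$ reduces $\tilde{X}_{k'l'}$ to $\sum_j x_j\,\chi(m_j,n_j)\sum_{h\in H}\chi(h)$ with $H=\ker\psi$, and character orthogonality kills the inner sum unless $\chi$ is trivial on $H$, in which case $\chi$ factors through $\mathrm{im}\,\psi\cong\mathbb{Z}_D$ and is a power of the canonical character, pinning $(k',l')$ back onto $\langle(k,l)\rangle$. Your factorization step is sound (it is the standard fact that characters trivial on a subgroup are characters of the quotient, and the quotient here is cyclic of order $D$ generated by the image of $d$). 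The Parseval route is shorter and avoids any character theory, but it only certifies that the off-orbit coefficients vanish in aggregate; your route exhibits the vanishing coefficient-by-coefficient, makes the role of the subgroup $H$ from \Cref{lem:subsig} explicit, and would adapt more directly to situations where a norm identity is unavailable. Both are complete proofs of the lemma.
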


Note that \cref{eq:lemma3.5} defines all entries of ${\tt X}$. 
This is true as for any $m$ and $n$, there exists a $j$ such that $mkN_2' + nlN_1' = jd \pmod{N}$, as demonstrated in the proof of \cref{lem:subsig}.

\begin{proof}
For the constructed matrix ${\tt X}$ in \cref{eq:lemma3.5}, the $(k,l)$-subsignal takes the form,
\begin{equation} \label{eq:subsig_prop}
    x_j^{(k, l)} 
    = \sumstack{m,n}{mkN_2'+nlN_1'=jd}{N}{X_{mn}} 
    = \sumstack{m,n}{mkN_2'+nlN_1'=jd}{N}{x_j}
    = Mx_j \ ,
\end{equation}
where the last equality uses \cref{lem:subsig} to count the $M = N_1N_2/D$ elements in the sum.
Therefore,
as the $(k,l)$-subsignal satisfies $\tilde{\bf x}^{(k, l)} = M\tilde{\bf x}$,
\cref{lem:amb1D} implies
\begin{equation}
\label{eq:2.2wD}
    \tilde{x}_\lambda^{(k,l)}=M\tilde{x}_\lambda\neq 0 \text{ if and only if }\gcd(\lambda,D)=1.
\end{equation}
Now consider any frequency $(k',l')\sim(k,l)$ of ${\tt X}$,
meaning there exists a $\lambda$ such that $k' = \lambda k \pmod{N_1}$ and $l' = \lambda l \pmod{N_2}$ with $\gcd(\lambda,D)=1$. 
\Cref{eq:X_lambdak} states that $\tilde{X}_{\lambda k, \lambda l} = \tilde{x}_\lambda^{(k, l)}$,
so the equivalence in \cref{eq:subsig_prop} yields
$\tilde{X}_{k'l'}=\tilde{X}_{\lambda k, \lambda l}=M\tilde{x}_\lambda$. 
Therefore, 
as $\gcd(\lambda,D)=1$, 
\cref{eq:2.2wD} implies that $\tilde{X}_{k'l'}\neq 0$.
Hence, 
we have $\tilde X_{k'l'}\neq0$ whenever $(k',l')\sim(k,l)$.

Next, we handle the other case,
where we must show that $\tilde{X}_{k'l'}=0$ for the remaining frequencies $(k', l')$.
Consider the Frobenius entrywise 2-norm of ${\tt X}$, which can be computed as
\begin{equation*}
    \norm{{\tt X}}_F^2 = \sum_{m=0}^{N_1-1}\sum_{n=0}^{N_2-1}|X_{mn}|^2 
    = \sum_{j = 0}^{D - 1} 
    \sumstack{m,n}{mkN_2' + nlN_1' = jd}{N}{\abs{X_{mn}}^2}
    = \sum_{j = 0}^{D - 1} M\abs{x_j}^2 
    = M\norm{{\bf x}}_2^2 \ ,
\end{equation*}
where we have again used the definition of ${\tt X}$ to replace within the sum the $M$ entries of ${\tt X}$ that equal $x_j$.
Therefore, applying the discrete Parseval relation yields,
\begin{equation} \label{eq:norms}
    \norm{\tilde{\tt X}}_F^2
    = N_1N_2 \norm{{\tt X}}_F^2
    = N_1N_2M\norm{{\bf x}}_2^2
    = \frac{N_1N_2M}{D}\norm{\tilde{\bf x}}_2^2=M^2\norm{\tilde{\bf x}}_2^2\ .
\end{equation}
As $\tilde{X}_{\lambda k,\lambda l}=M\tilde{x}_\lambda$, we also have that   
\begin{equation*}
    M^2\norm{\tilde{\bf x}}_2^2  = M^2 \sum_{\lambda = 0}^{D - 1} \abs{\tilde x_\lambda}^2 =\sum_{\lambda=0}^{D - 1}\abs{\tilde{X}_{\lambda k, \lambda l}}^2\ ,
\end{equation*}
which in combination with \cref{eq:norms} shows that
\begin{equation*}
     \norm{\tilde{\tt X}}_F^2 = \sum_{\lambda=0}^{D - 1}\abs{\tilde{X}_{\lambda k, \lambda l}}^2 \ .
\end{equation*}
This implies that all nonzero DFT coefficients are contained in the set $\set{\tilde{X}_{\lambda k,\lambda l}: 0\le \lambda < D}$. 
Therefore, we have $\tilde{X}_{k'l'}=0$ if there is no $\lambda$ such that $k'=\lambda k \pmod{N_1}$ and $l'=\lambda l \pmod{N_2}$, as desired. 
\end{proof}

We now provide two examples that illustrate the application of \cref{lem:amb2D}. 
Both examples build on \exref{ex:amb1D_6}.  
The first example demonstrates a straightforward case where $D=N$. 
The second example exhibits a more involved setup with $D \ne N$.

\begin{example} \label{ex:amb2D_2x3}
    Let $N_1 = 2, N_2 = 3$ and $k = l = 1$.
    Note that $N = \lcm(N_1, N_2) = 6$, and as $N_1$, $N_2$, $k$, and $l$ are all relatively prime, $D=N$.   Therefore, by \exref{ex:amb1D_6}, the relevant signal from \cref{lem:amb1D} is ${\bf x} = \begin{bmatrix}
        1 & 0 & -1 & -1 & 0 & 1
    \end{bmatrix}$.
    We define the following indexing matrix ${\tt S}$ by $S_{m n} = (kmN_2' + lnN_1')/d \bmod D$. 
    As $d=N/D=1$, we have $S_{mn}=3m+2n \bmod{6}$,
    which gives,
    \begin{equation*}
        {\tt S} = 
        \begin{bmatrix}
            0 & 2 & 4 \\
            3 & 5 & 1
        \end{bmatrix} \ .
    \end{equation*}
    The matrix ${\tt S}$ acts as an indexing tool in $j$ to construct ${\tt X}$ from ${\bf x}$.
    Since $S_{0 0} = 0$, 
    we set $X_{00} = x_0$.
    Similarly, $S_{01} = 2$, so $X_{01} = x_2$.
    Continuing to set $X_{m n} = x_{S_{mn}}$ produces the matrix and its DFT,
    \begin{equation*}
        {\tt X} = 
        \begin{bNiceArray}{rrr}
            1 & {-1} & 0 \\
            {-1} & 1 & 0
        \end{bNiceArray}
      \qquad , \qquad
        \tilde{\tt X} = 
        \begin{bmatrix}
            0 & 0 & 0 \\
            0 & 2 - 2\uroot{3}{} & 2 - 2\uroot{3}{*}
        \end{bmatrix} 
        \ . 
    \end{equation*}
    One can see that $\tilde{\tt X}$ only has nonzero DFT coefficients at frequencies $(1,1)$ and $(1,2)\sim (1,1)$.
\end{example}

\begin{example}
    Let $N_1 = 4, N_2 = 6$ (so $N_1' = 2, N_2' = 3$, and $N = 12$) and let $k = l = 2$.
    We compute $d_1 = \gcd(k, N_1) = 2$ and $d_2 = \gcd(l, N_2) = 2$, 
    so $D_1 = N_1/d_1 = 2$, $D_2 = N_2/d_2 = 3$, $D = \lcm(D_1, D_2) = 6$, and $d = N/D = \lcm(N_1,N_2)/6 = 2$.
    Since $D = 6$, the signal  from \exref{ex:amb1D_6}, namely
    ${\bf x} = 
    \begin{bmatrix}
        1 & 0 & -1 & -1 & 0 & 1
    \end{bmatrix}$, 
    is used again.
    The indexing matrix $S_{mn} = (kmN_2' + lnN_1')/d \bmod{N} = 3m + 2n \bmod6$ is given by
    \begin{equation*}
        {\tt S} = 
        \begin{bmatrix}
            0 & 2 & 4 & 0 & 2 & 4 \\
            3 & 5 & 1 & 3 & 5 & 1 \\
            0 & 2 & 4 & 0 & 2 & 4 \\
            3 & 5 & 1 & 3 & 5 & 1 
        \end{bmatrix} \ .
    \end{equation*}
    This gives a signal with nonzero DFT coefficients only at frequencies $(2,2)$ and $(2,4)\sim(2,2)$,
    \begin{equation*}
        {\tt X} = 
            \begin{bNiceArray}{rrrrrr}
                1 & -1 & 0 & 1 & -1 & 0 \\
                -1 & 1 & 0 & -1 & 1 & 0 \\
                1 & -1 & 0 & 1 & -1 & 0 \\
                -1 & 1 & 0 & -1 & 1 & 0 \\
            \end{bNiceArray}
      \qquad , \qquad
        \tilde{\tt X} = 
        \begin{bmatrix}
            0 & 0 & 0 & 0 & 0 & 0 \\
            0 & 0 & 0 & 0 & 0 & 0 \\
            0 & 0 & 8 - 8\uroot{3}{} & 0 & 8 - 8\uroot{3}{*} & 0 \\
            0 & 0 & 0 & 0 & 0 & 0 \\
        \end{bmatrix} \ .
    \end{equation*}
\end{example}

The following theorem applies the construction in \cref{lem:amb2D} to prove that a representative of each coefficient class is required to guarantee recovery of any $N_1 \times N_2$ integer matrix.
It further demonstrates the necessity of this requirement when restricting to  $N_1\times N_2$ binary matrices.
Combining this result with the upper bound in \cref{thm:amb2D} establishes that our equivalence relation defines the minimal sampling for unique recovery of integer matrices. 

\begin{theorem}[Minimal Sampling]
\label{thm:amb2D}
    For a fixed matrix size $N_1 \times N_2$,
    and any frequencies $0 \le k < N_1$ and $0 \le l < N_2$,
    the following hold.
    \begin{enumerate}
        \item For any integer matrix ${\tt X}$,
        there exists an integer matrix ${\tt Y}$ such that $\tilde{X}_{k'l'} \ne \tilde{Y}_{k'l'}$ if and only if $(k', l') \sim (k, l)$.
        \item There exist binary matrices ${\tt X}^{(1)}$ and ${\tt X}^{(2)}$ such that $\tilde{X}^{(1)}_{k'l'} \ne \tilde{X}^{(2)}_{k'l'}$ if and only if $(k', l') \sim (k, l)$.
    \end{enumerate}
\end{theorem}

\begin{proof}
    Let ${\tt Z}$ be the matrix from \cref{lem:amb2D} for the frequency $(k, l)$.
    For statement (1),
    define ${\tt Y} = {\tt X} + {\tt Z}$.
    Since $\tilde{Z}_{k'l'} \ne 0$ if and only if $(k', l') \sim (k, l)$,
    $\tilde{Y}_{k'l'} \ne \tilde{X}_{k'l'}$ if and only if $(k', l') \sim (k, l)$.

    For statement (2),
    define the matrices ${\tt X}^{(1)}$ and ${\tt X}^{(2)}$ by
    \begin{equation*}
        { X}^{(1)}_{mn} = \begin{cases}
            1 & \text{ if }{Z}_{mn} = 1 \\
            0 & \text{ if }{Z}_{mn} \ne 1
        \end{cases}
      \qquad , \qquad
        { X}^{(2)}_{mn} = \begin{cases}
            1 & \text{ if }{Z}_{m n} = -1 \\
            0 & \text{ if }{Z}_{m n} \ne -1
        \end{cases} \ .
    \end{equation*}
    Since the construction in \cref{lem:amb2D} has entries in $\set{-1, 0, 1}$,
    these matrices satisfy ${\tt Z} = {\tt X}^{(1)} - {\tt X}^{(2)} $. 
    In particular, 
    $\tilde{ X}^{(1)}_{k'l'} \ne
    \tilde{ X}^{(2)}_{k'l'}$
    if and only if 
    $(k', l') \sim (k, l)$,
    as desired.
\end{proof}

Note that the weaker statement (2) for binary matrices is necessary.
Considering the signal ${\tt X}$ of all zeros,
which is the only binary matrix to satisfy $\tilde{X}_{00} = 0$,
demonstrates that statement (1) is false with the addition of the binary constraint. 
To demonstrate the construction in part (2) of \cref{thm:amb2D}, 
we build upon \exref{ex:amb2D_2x3} to  generate a pair of binary matrices ${\tt X}^{(1)}$ and ${\tt X}^{(2)}$ that are indistinguishable except by a representative of the given equivalence class. 

\begin{example}
    Let $N_1 = 2, N_2 = 3$ and $k = l = 1$.
    From \exref{ex:amb2D_2x3}, we have the matrix
    \begin{equation*}
        {\tt X} = 
        \begin{bNiceArray}{rrr}
            1 & -1 & 0 \\
            -1 & 1 & 0
        \end{bNiceArray}
      \qquad , \qquad
        \tilde{\tt X} = 
        \begin{bmatrix}
            0 & 0 & 0 \\
            0 & 2 - 2\uroot{3}{} & 2 - 2\uroot{3}{*}
        \end{bmatrix} , 
    \end{equation*}
    The construction in \cref{thm:amb2D} thus yields,
    \begin{align*}
        {\tt X}^{(1)} &= 
        \begin{bmatrix}
            1 & 0 & 0 \\
            0 & 1 & 0
        \end{bmatrix}
      \qquad , \qquad
        \tilde{\tt X}^{(1)} = 
        \begin{bmatrix}
            2 & \uroot{6}{} & \uroot{6}{*} \\
            0 & 1 - \uroot{3}{} & 1 - \uroot{3}{*}
        \end{bmatrix} \\
        {\tt X}^{(2)} &= 
        \begin{bmatrix}
            0 & 1 & 0 \\
            1 & 0 & 0
        \end{bmatrix}
      \qquad , \qquad
        \tilde{\tt X}^{(2)} = 
        \begin{bmatrix}
            2 & \uroot{6}{} & \uroot{6}{*} \\
            0 & \uroot{3}{} - 1 & \uroot{3}{*} - 1
        \end{bmatrix} \ ,
    \end{align*}
    where these two matrices only disagree at DFT  frequencies $(1,1)$ and $(1,2)\sim(1,1)$.
\end{example}


\subsection{Enumeration and Algebraic Structure of Coefficient Classes}

Our work in the previous section demonstrated that \cref{thm:dep2D} gives a tight upper bound on the number of DFT measurements required for uniqueness.
This section investigates the algebraic structure of the equivalence classes,
which may be used to count the number of elements of $\tilde{\tt X}/\sim$.

The DFT coefficient equivalence classes of an $N_1 \times N_2$ integer matrix may be identified with cyclic subgroups of $\mathbb{Z}_{N_1} \times \mathbb{Z}_{N_2}$. 
First, if $(k, l)$ and $(k', l')$ satisfy $(k, l) \sim (k', l')$, then there exists $0 < \lambda < D$ such that $k' = \lambda k \pmod{N_1}$, $l' = \lambda l\pmod{N_2}$. 
This is equivalent to saying $(k', l') = \lambda(k, l)$  in $\mathbb{Z}_{N_1} \times \mathbb{Z}_{N_2}$, 
which shows that $\langle (k', l') \rangle \subseteq \langle (k, l) \rangle$ in $\mathbb{Z}_{N_1} \times \mathbb{Z}_{N_2}$. 
The symmetry of the equivalence relation $\sim$ implies 
$\langle (k, l) \rangle = \langle (k', l') \rangle$.

For the reverse direction, now suppose that 
$\langle (k, l) \rangle = \langle (k', l') \rangle$. The order of $\langle (k,l) \rangle$ in $\mathbb{Z}_{N_1} \times \mathbb{Z}_{N_2}$ is known to be 
\begin{equation*}
    | \langle (k,l) \rangle | = \lcm\left (\frac{N_1}{\gcd(k, N_1)}, \frac{N_2}{\gcd(l, N_2)} \right) = D \ .
\end{equation*}
Since $(k', l') \in \langle (k, l) \rangle$, 
and $D$ is the order of $(k, l)$ and $(k', l')$,
there exists $0 \le \lambda < D$ such that 
$(k', l') = \lambda(k, l)$.
Properties of cyclic groups thus imply that the order of $(k', l')$ in 
$\mathbb{Z}_{N_1} \times \mathbb{Z}_{N_2}$
is $D / \gcd(\lambda, D)$. 
As the order of $(k', l')$ is $D$, we must have $\gcd(\lambda, D) = 1$ which shows that $(k, l) \sim (k', l')$.

This correspondence yields an approach for computing the theoretical number of DFT coefficients required for inversion,
which is stated as the following corollary to \cref{thm:dep2D,thm:amb2D}.
\begin{corollary} \label{cor:coeff_count}
    The minimal number of DFT coefficients required to uniquely recover an integer $N_1 \times N_2$ matrix is the number of cyclic subgroups of $\mathbb{Z}_{N_1} \times \mathbb{Z}_{N_2}$.
\end{corollary}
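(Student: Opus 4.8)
The plan is to read the corollary off as a direct consequence of the two uniqueness theorems together with the group-theoretic correspondence established in the paragraphs immediately preceding it. First I would use \Cref{thm:dep2D} for the upper bound: since $\tilde{\tt X}_{kl} = 0$ if and only if $\tilde{\tt X}_{k'l'} = 0$ whenever $(k',l') \sim (k,l)$, applying this to the difference ${\tt X} - {\tt Y}$ of two candidate integer matrices shows that agreement of the sampled coefficients --- one representative per class of $\sim$ --- forces $\tilde{\tt X} = \tilde{\tt Y}$ and hence ${\tt X} = {\tt Y}$. So at most $\abs{\tilde{\tt X}/\sim}$ coefficients are needed. Then I would use \Cref{thm:amb2D} for the matching lower bound: for every class there is a pair of distinct (in fact binary) matrices whose spectra agree off that class, so no sampling set omitting an entire class can be sufficient, and at least $\abs{\tilde{\tt X}/\sim}$ coefficients are required. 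Together these give that the minimal number of DFT coefficients equals $\abs{\tilde{\tt X}/\sim}$ exactly.

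It then remains to count $\abs{\tilde{\tt X}/\sim}$, and here I would invoke the correspondence $[(k,l)] \mapsto \langle (k,l) \rangle$ between coefficient classes and cyclic subgroups of $\mathbb{Z}_{N_1} \times \mathbb{Z}_{N_2}$ already worked out above the corollary. That discussion shows the map is well defined --- $(k,l) \sim (k',l')$ implies, via $(k',l') = \lambda(k,l)$ and symmetry of $\sim$, that $\langle (k,l) \rangle = \langle (k',l') \rangle$ --- and injective, since equality of the cyclic subgroups forces both $(k,l)$ and $(k',l')$ to have the common order $D = \lcm(N_1/\gcd(k,N_1),\, N_2/\gcd(l,N_2))$, whence any $\lambda$ with $(k',l') = \lambda(k,l)$ satisfies $\gcd(\lambda, D) = 1$, i.e.\ $(k,l) \sim (k',l')$. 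Surjectivity is immediate: every cyclic subgroup $H$ has a generator $(k,l)$, and $H = \langle (k,l) \rangle$ is the image of the class of $(k,l)$. Hence the map is a bijection, so $\abs{\tilde{\tt X}/\sim}$ equals the number of cyclic subgroups of $\mathbb{Z}_{N_1} \times \mathbb{Z}_{N_2}$.

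Since everything substantive --- both theorems and the bijection --- is already in hand, there is no serious obstacle; the only point worth being careful about is that the modulus $D$ appearing in \Cref{def:dep2D} is genuinely a class invariant (it equals $\abs{\langle (k,l) \rangle}$), so that the equivalence relation, its symmetry, and the correspondence are all consistent irrespective of which representative $(k,l)$ one picks. With that observation, the corollary is simply the composition of the counting identity $(\text{minimal number of samples}) = \abs{\tilde{\tt X}/\sim}$ with the bijection between $\tilde{\tt X}/\sim$ and the cyclic subgroups of $\mathbb{Z}_{N_1}\times\mathbb{Z}_{N_2}$.
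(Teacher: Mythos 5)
Your proposal is correct and follows essentially the same route as the paper: the upper bound from \Cref{thm:dep2D}, the matching lower bound from \Cref{thm:amb2D}, and the bijection $[(k,l)] \mapsto \langle (k,l) \rangle$ between coefficient classes and cyclic subgroups, which the paper establishes in the discussion immediately preceding the corollary (including the observation that $D = \abs{\langle (k,l)\rangle}$ is a class invariant). No gaps to report.
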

\noindent
Generally, the number of cyclic subgroups of $\mathbb{Z}_{N_1} \times \mathbb{Z}_{N_2}$ may be computed from
\begin{equation} \label{eq:n_subgroups}
    c(N_1, N_2) = \sum_{a\divs N_1, b\divs N_2} \phi(\gcd(a, b)),
\end{equation}
where the sum ranges over pairs of divisors of $N_1$ and $N_2$, and $\phi$ is the totient function~\cite{hampejs2014representingcountingsubgroupsgroup}.
The following examples show a few cases where this expression can be simplified.

\begin{example}[Coprime Dimensions]
    
If $\gcd(N_1, N_2) = 1$, then 
\begin{equation*}
    \mathbb{Z}_{N_1} \times \mathbb{Z}_{N_2} \cong \mathbb{Z}_{N_1N_2},
\end{equation*}
which is cyclic.
In any cyclic group, there is exactly one subgroup of order $d$ for each divisor $d$ of its order, so we have $\tau(N_1N_2) = \tau(N_1)\tau(N_2)$ coefficient classes.

\end{example}

\begin{example}[Prime Power Square]
When $N_1 = N_2 = N$, \cref{eq:n_subgroups} becomes,
\begin{equation*}
    c(N) = \sum_{d\cdot e = N} d 2^{\omega(e)},
\end{equation*}
where $\omega(e)$ counts the number of distinct prime factors of $e$~\cite{hampejs2014representingcountingsubgroupsgroup}.
If $N = p^\alpha$, for some prime $p$ and integer $\alpha$, the number of coefficient classes is then given by
\begin{align*}
    \sum_{d\cdot e = p^\alpha} d2^{\omega(e)}
    = \sum_{j = 0}^{\alpha} p^j 2^{\omega(p^{\alpha - j})} 
    =p^{\alpha}2^0  + \sum_{j = 0}^{\alpha - 1}p^j2^{1} 
    = 2 \frac{p^\alpha - 1}{p - 1} + p^{\alpha} 
    = \frac{p^{\alpha + 1} + p^\alpha - 2}{p - 1}\ .
\end{align*}
\end{example}


\section{Inversion Algorithms}
\label{sec:algs}

\delimitershortfall=0pt 

We now begin to address \cref{prob:alg} regarding recovery of integer signals and matrices.  
Let ${\bf x}$ be an integer signal of length $N$.
Without loss of generality, consider the minimal set of DFT coefficients, $\set{\tilde x_d : d \divs N}$.
\Cref{lem:amb1D,lem:dep1D} imply that ${\bf x}$ may be uniquely recovered from,
\begin{equation} \label{eq:ip_form}
    \begin{NiceArray}{[ccc]c[c]c}
        \uroot{N}{0\cdot d_1} & \cdots & \uroot{N}{(N-1)d_1} & \Block{3-1}{{\bf x} = } & \tilde{x}_{d_1} & \Block{3-1}{, \qquad {\bf x} \in \mathbb{Z}^N\ ,} \\
        \vdots & \ddots & \vdots & & \vdots \\
        \uroot{N}{0\cdot d_{\tau}} & \cdots & \uroot{N}{(N-1)d_{\tau}} & & \tilde{x}_{d_\tau}
    \end{NiceArray}
\end{equation}
where $d_1, \ldots, d_{\tau}$ are the divisors of $N$. 
Although the solution to the inverse problem is known to be unique, finding the solution is a highly nontrivial task. 
One clear difficulty arises from the lack of bounds on the range of integer values that ${\bf x}$ can take.
Many reconstruction methods, 
including branch-and-bound integer linear programming (ILP),
require such bounds to be efficient~\cite{Aardal2000}.
Even in the simplest case, when ${\bf x}$ is binary with each entry equal to 0 or 1, ILP is known to be NP-hard \cite{karp1975computational}.
Furthermore, since this is a feasibility problem with no objective function to optimize, the effectiveness of the method depends strongly on the rank of the linear system, which can be highly underdetermined~\cite{Lenstra1983}.
Defining an arbitrary objective function may improve or worsen the performance of branch-and-bound,
although it is difficult to know what objective is best~\cite{Aardal2000}.
Furthermore, solving \cref{eq:ip_form} directly with ILP can be unstable, as there may exist integer linear combinations of the roots of unity that come arbitrarily close, but do not exactly equal, the given DFT coefficients~\cite{myerson1986small,barber2023small}.

Our reconstruction methods build on the preceding theoretical results.  
In particular, instead of solving the entire linear system in \cref{eq:ip_form} at once, we decompose the problem into more manageable subproblems based on the decimation and subsignal concepts in \cref{def:dec} and \cref{def:subsig}.  
We demonstrate that this decomposition significantly decreases the overall search space. 

This section begins with a description of the general divide-and-conquer strategy. 
We then use this framework to develop an algorithm to reconstruct a one-dimensional integer signal from its minimal set of DFT coefficients.  
This algorithm generalizes the approach in \cite{pei2022binary1D}, which addresses the case where $N$ is a power of 2.  
The one-dimensional inversion algorithm also provides the foundation for the more general two-dimensional algorithm described afterwards.


\subsection{Inversion Strategy}
\label{ssec:strat}

Recall from \cref{def:dec} that, for $d\divs N$, ${\bf x}^{(N/d)}$ is the signal ${\bf x}$ decimated in frequency space to length $N/d$,
\begin{equation*}
    x^{(N/d)}_m = \sum_{n = 0}^{d - 1}x_{m + nN/d},
    \qquad \text{for}~0 \le m < N/d \ ,
    \end{equation*}
and, by \cref{lem:dec_in_freq}, satisfies $\tilde{x}^{(N/d)}_k = \tilde{x}_{dk}$ for all $0 \le k < N/d$. Therefore, knowledge of ${\bf x}^{(N/d)}$ for some divisor $d$ is equivalent to knowing $\tilde{x}_{dk}$ for all $0 \le k < N/d$.
We can then replace the corresponding row of \cref{eq:ip_form} with the equivalent set of $N/d$ equations given by the matrix form of frequency decimation in \cref{eq:matrix_form}, namely
\begin{equation} \label{eq:ip_dec}
    \begin{bmatrix}
        \idmat{N/d} & \cdots & \idmat{N/d}
    \end{bmatrix}
    {\bf x} = {\bf x}^{(N/d)} \ .
\end{equation}
This substitution is advantageous because it mitigates the stability and rank issues of \cref{eq:ip_form}.
Replacing the complex irrational coefficients of the linear system with integer coefficients improves numerical stability.  
Indeed, while integer linear combination of $N$th roots of unity are dense in the complex numbers~\cite{myerson1986small,Buhler2000dense,barber2023small}, any integer vector ${\bf x}$ that does not satisfy \cref{eq:ip_dec} will have an error of at least 1 (in any $p$-norm). 
Furthermore, when $d \ne N$, the substitution replaces a single equation with $N/d$ linearly independent equations, which increases the rank of the system.  
Note that this substitution is only valid when ${\bf x}$ is an integer signal.  

For each divisor $d$ of $N$,
if the decimated signal ${\bf x}^{(N/d)}$ is known, then performing the substitution in \cref{eq:ip_dec} can improve the ILP formulation.
However, note that if $d \divs d' \divs N$, then 
\begin{equation*}
    \begin{bmatrix}
        \idmat{N/d} & \cdots & \idmat{N/d}
    \end{bmatrix}
    {\bf x} = {\bf x}^{(N/d)}
   \ \text{ implies that } \ 
    \begin{bmatrix}
        \idmat{N/d'} & \cdots & \idmat{N/d'}
    \end{bmatrix}
    {\bf x} = {\bf x}^{(N/d')}
\end{equation*}
by \cref{lem:dec_in_freq}, since $\set{\tilde{x}_k : d' \divs k} \subseteq \set{\tilde{x}_k : d \divs k}$.
Therefore, it is sufficient to only consider the prime divisors of $N$ when performing the replacements of \cref{eq:ip_dec}.
This work shows that the ILP in \cref{eq:ip_form} may be written as the equivalent ILP, 
\begin{equation} \label{eq:ip_better}
    \begin{NiceArray}{[ccc]c[c]c}
        \idmat{N/p_1} & \cdots & \idmat{N/p_1} & \Block{4-1}{{\bf x} = } & {\bf x}^{(N/p_1)} & \Block{4-1}{,\qquad {\bf x}\in\mathbb{Z}^N\ ,}\\
        \vdots & \ddots & \vdots &  & \vdots & \\
        \idmat{N/p_{\omega}} & \cdots & \idmat{N/p_{\omega}} & & {\bf x}^{(N/p_{\omega})} \\
        \cmidrule(lr){1-3}
        \cmidrule(lr){5-5}
        \uroot{N}{0} & \cdots & \uroot{N}{(N - 1)} & & \tilde{x}_{1} \\
    \end{NiceArray}
\end{equation}
where $p_1, \ldots, p_\omega$ are the prime factors of $N$.
If both ${\bf x}$ and ${\bf y}$ in $\mathbb{R}^N$ satisfy the linear constraints in \cref{eq:ip_better}, then \cref{lem:dec_in_freq} implies that
$\tilde x_k = \tilde y_k$
for all $k$ such that some prime factor $p$ of $N$ divides $k$.
Since this is equivalent to the condition $\gcd(k, N) \ne 1$, the top $\omega$ blocks of the matrix in \cref{eq:ip_better} fix $N - \phi(N)$ DFT coefficients of ${\bf y}$.
The remaining last row implies that  $\tilde{y}_1 = \tilde{x}_1$. 
Note that as both ${\bf x}$ and ${\bf y}$ are real-valued, this last row also implies that $\tilde{y}_{-1}=\tilde{x}_{-1}$, as these DFT coefficients are the complex conjugates of $\tilde{y}_1$ and $\tilde{x}_1$. 
Therefore, $N - \phi(N) + 2$ DFT coefficients of ${\bf x}$ are fixed by linear constraints in \cref{eq:ip_better}.

In general, as ${\bf x}$ is real-valued, knowledge of the DFT coefficient $\tilde x_k$ also gives $\tilde{x}_{-k} = \tilde{x}_k^*$.
We can thus always rewrite the naive ILP in \cref{eq:ip_form} as,
\begin{equation} \label{eq:ip_conj}
    \begin{NiceArray}{[ccc]c[c]c}
        \uroot{N}{0 \cdot d_1} & \cdots & \uroot{N}{(N-1)d_1} & \Block{5-1}{{\bf x} = } & \tilde{x}_{d_1} & \Block{5-1}{, \qquad {\bf x} \in \mathbb{Z}^N\ ,}\\
        \uroot{N}{-0 \cdot d_1} & \cdots & \uroot{N}{-(N-1)d_1} & & \tilde{x}_{d_1}^* \\
        \cmidrule(lr){1-3}\cmidrule(lr){5-5}
        \vdots & \ddots & \vdots & & \vdots & \\
        \cmidrule(lr){1-3}\cmidrule(lr){5-5}
        \uroot{N}{0\cdot d_{\tau}} & \cdots & \uroot{N}{(N-1)d_{\tau}} & & \tilde{x}_{d_\tau}\\
        \uroot{N}{-0\cdot d_{\tau}} & \cdots & \uroot{N}{-(N-1)d_{\tau}} & & \tilde{x}_{d_\tau}^*
    \end{NiceArray}
\end{equation}

Since the DFT is bijective, the rank of this matrix is the number of unique rows.
For each divisor $d$, the two rows $\uroot{N}{\pm nd}$ of \cref{eq:ip_conj} are the same if and only if $d = N$ or $d = N/2$.
Therefore, the rank of the matrix in \cref{eq:ip_conj} is $2\tau(N) - 2$ if $N$ is even, and $2\tau(N) - 1$ if $N$ is odd.

In either \cref{eq:ip_better} or \eqref{eq:ip_conj}, the rank of the matrix describes the size of the ILP search space.
An exhaustive search may solve the ILP by iterating through integer combinations of the free variables
and checking whether the corresponding pivot variable values are integers.
As such, we assume that the dimension of the search space of any ILP we construct is the number of free variables of the linear system over $\mathbb{R}$, or equivalently the nullity of its matrix.
Note that while conjugate rows are always included in implementations of the ILPs, we will often omit them in the matrix formulations for clarity.
The corresponding conjugate linear equations can be assumed to be implicitly present.

The ILP in \cref{eq:ip_conj} will serve as a baseline for evaluating the performance of our algorithms.
As the rank of this matrix is 
$2\tau(N) - 1$ if $N$ is odd and $2\tau(N) - 2$ if $N$ is even, the dimension of its search space is given by its nullity $N-2\tau(N) + 1$ or $N-2\tau(N) + 2$. 
On the other hand, the DFT is bijective, so the rank of the matrix in \cref{eq:ip_better} is $N - \phi(N) + 2$ (where we have included the implicit conjugate last row).
Thus, the dimension of the search space of \cref{eq:ip_better} is $N-(N-\phi(N)+2)=\phi(N)-2$.
If $N$ is prime (and odd), $N-2\tau(N) + 1 = N - 3 = \phi(N)-2$, showing that the ILP formulations of \cref{eq:ip_form,eq:ip_better} are the same.
However, when $N$ is composite, this construction causes a significant reduction in the dimension of the search space.
For example, if $N = 30$, the search space associated with \cref{eq:ip_form} has dimenension $30 - 2\tau(30) + 2 = 16$.  
For \cref{eq:ip_better}, the dimension of the search space is reduced to $\phi(N)-2 = 6$. 
We now develop an algorithm which uses this approach iteratively to solve the inverse problem.


\subsection{1D Inversion} 

For each divisor $d$ of $N$, the vector ${\bf x}^{(N/d)}$ is an integer signal of length $N/d$,
so it may be recovered recursively by the same method used for ${\bf x}$.
\Cref{alg:1D} outlines this recursive approach to invert a one-dimensional integer signal from its minimal set of DFT coefficients.
Lines~\ref{ln:base1}-\ref{ln:base2} give the base case as recovery is trivial if $N = 1$.
Line~\ref{ln:rec1D_loop} iterates through the prime factors of $N$, 
recursively computing the integer signal decimated by each prime factor $p$ in frequency space.
Finally, Line~\ref{ln:rec1D_ip} uses the collected subproblem solutions to set up the ILP in \cref{eq:ip_better}, returning its unique solution as the recovered integer signal.

\begin{algorithm}[htb]
\caption{Recursive 1D Inversion}
\label{alg:1D}

\begin{algorithmic}[1]
    \Require DFT coefficients $\tilde{x}_{d}$ for each divisor $d$ of $N$
    \Procedure{Invert1D}{$\set{\tilde x_d : d \in \Call{Divisors}{N}}$}
        \If{$N = 1$} \label{ln:base1}
            \State \Return $\begin{bmatrix} \tilde{x}_0 \end{bmatrix}$ \label{ln:base2}
        \EndIf
        \ForAll{$p\in\Call{PrimeFactors}{N}$} \label{ln:rec1D_loop}
            \State ${\bf x}^{(N/p)} \gets \Call{Invert1D}{\set{\tilde{y}_{d} = \tilde x_{dp} :  d\in \Call{Divisors}{N/p}}}$ \label{ln:rec_call}
        \EndFor
        \State \Return the solution to the ILP, \label{ln:rec1D_ip}
        \Statex \makebox{%
            $\begin{NiceArray}{[ccc]c[c]c}
                \idmat{N/p_1} & \cdots & \idmat{N/p_1} & \Block{4-1}{{\bf x} = } & {\bf x}^{(N/p_1)} & \Block{4-1}{, \qquad {\bf x} \in \mathbb{Z}^N \ .}\\
                \vdots & \ddots & \vdots & & \vdots \\
                \idmat{N/p_{\omega}} & \cdots & \idmat{N/p_{\omega}} & & {\bf x}^{(N/p_{\omega})} \\
                \cmidrule(lr){1-3}
                \cmidrule(lr){5-5}
                \uroot{N}{0} & \cdots & \uroot{N}{(N - 1)} & & \tilde{x}_{1} \\
            \end{NiceArray}$
        }
    \EndProcedure
\end{algorithmic}
\end{algorithm}

While \cref{alg:1D} offers several improvements over a naive ILP or brute-force search solution, it suffers from computational redundancy due to repeated subproblems.
\Cref{fig:recursion} displays the subproblem recursion tree of \cref{alg:1D} when $N = 30$.
Observe that ${\bf x}^{(2)}$ appears as a subproblem of both ${\bf x}^{(6)}$ and ${\bf x}^{(10)}$. 
Thus, \cref{alg:1D} solves the subproblem for ${\bf x}^{(2)}$ twice.
The same is true for ${\bf x}^{(3)}$ and ${\bf x}^{(5)}$.
Note that the subproblem of size $1$ is solved 6 times by \cref{alg:1D}, but is not computationally relevant as this is the trivial base case.

\begin{figure}[htb]
\centering

\begin{tikzpicture}
\node (0) at (0,0) {${\bf x}$};
\node (1) at (-4.0,-1) {${\bf x}^{(6)}$};
\node (2) at (0.0,-1) {${\bf x}^{(10)}$};
\node (3) at (4.0,-1) {${\bf x}^{(15)}$};
\node (4) at (-5.0,-2) {${\bf x}^{(2)}$};
\node (5) at (-3.0,-2) {${\bf x}^{(3)}$};
\node (6) at (-1.0,-2) {${\bf x}^{(2)}$};
\node (7) at (1.0,-2) {${\bf x}^{(5)}$};
\node (8) at (3.0,-2) {${\bf x}^{(3)}$};
\node (9) at (5.0,-2) {${\bf x}^{(5)}$};
\node (10) at (-5.0,-3) {${\bf x}^{(1)}$};
\node (11) at (-3.0,-3) {${\bf x}^{(1)}$};
\node (12) at (-1.0,-3) {${\bf x}^{(1)}$};
\node (13) at (1.0,-3) {${\bf x}^{(1)}$};
\node (14) at (3.0,-3) {${\bf x}^{(1)}$};
\node (15) at (5.0,-3) {${\bf x}^{(1)}$};
\draw (3) -- (9);
\draw (7) -- (13);
\draw (2) -- (7);
\draw (4) -- (10);
\draw (0) -- (2);
\draw (1) -- (4);
\draw (9) -- (15);
\draw (6) -- (12);
\draw (0) -- (1);
\draw (2) -- (6);
\draw (3) -- (8);
\draw (8) -- (14);
\draw (5) -- (11);
\draw (1) -- (5);
\draw (0) -- (3);
\end{tikzpicture}
\vspace{-2.5pc}
\caption{
Recursion tree showing subproblem structure when \cref{alg:1D} is applied to a length $N = 30$ signal.}
\label{fig:recursion}
\end{figure}
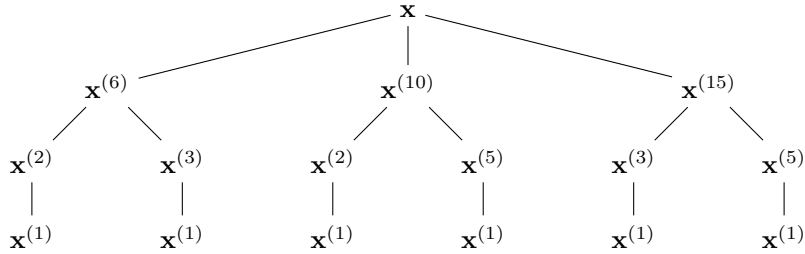

In general, this computational redundancy may be understood by considering the subgroup lattice of $\mathbb{Z}_N$.
The subgroup lattice uses set inclusion to partially order the subgroups of $\mathbb{Z}_N$.
Identifying the subgroups of $\mathbb{Z}_{N}$ with the divisors of $N$ gives a correspondence between the subgroup $\langle d\rangle$ of size $N/d$ and the subproblem ${\bf x}^{(N/d)}$.
The lattice structures in \cref{fig:z30} illustrate the redundancy of the size 2, size 3, and size 5 subproblems of a length 30 signal.
For example, both the subgroup $\langle 5\rangle$ of order $6$ and the subgroup $\langle 3\rangle$ of order $10$ on the third lattice level connect to the subgroup $\langle 15\rangle$ on the second level.
This parallels the dependence of both ${\bf x}^{(6)}$ and ${\bf x}^{(10)}$ on ${\bf x}^{(2)}$ in the subproblem lattice.

Using the correspondence between divisors of $N$ and subgroups of $\mathbb{Z}_N$, it is apparent that two subgroups are connected in the lattice if and only if their orders differ by a prime factor.
Applying this fact to the recursive step of \cref{alg:1D}, we see that each subproblem explicitly depends only on subproblems of the previous lattice layer. 
In particular, this implies that a dynamic programming implementation of \cref{alg:1D} can remove computational redundancy by first solving each of the subproblems on the bottom level, then the second level, and so on.
Since subgroups on the same lattice level are not related by inclusion, 
subproblems which share a lattice layer cannot depend on each other.
Thus, the order in which to solve the subproblems on each level is arbitrary.

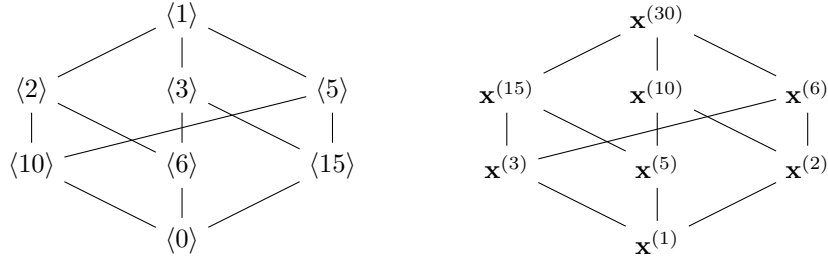
\begin{figure}[htb]
\centering

\begin{tikzpicture}
\foreach \n/\x/\y/\gen in {
30/0/0/0,
10/-2/1/10,
6/0/1/6,
15/2/1/15,
2/-2/2/2,
3/0/2/3,
5/2/2/5,
1/0/3/1%
} { \node (\n) at (\x,\y) {$\langle\gen\rangle$}; }
  \foreach \a/\b in {
15/5,
10/5,
6/2,
2/1,
3/1,
30/10,
5/1,
30/6,
6/3,
15/3,
30/15,
10/2%
  } { \draw (\a) -- (\b); }
\end{tikzpicture}%
\qquad\qquad
\begin{tikzpicture}
\foreach \n/\x/\y/\size in {
30/0/0/1,
10/-2/1/3,
6/0/1/5,
15/2/1/2,
2/-2/2/15,
3/0/2/10,
5/2/2/6,
1/0/3/30%
} { \node (\n) at (\x,\y) {${\bf x}^{(\size)}$}; }
  \foreach \a/\b in {
15/5,
10/5,
6/2,
2/1,
3/1,
30/10,
5/1,
30/6,
6/3,
15/3,
30/15,
10/2%
  } { \draw (\a) -- (\b); }
\end{tikzpicture}
\vspace{-2.5pc}
\caption{
{\bf Left}:
The Hasse diagram for the subgroup lattice of $\mathbb{Z}_{30}$.
Two subgroups $H \subsetneq K$ are connected if they are related by inclusion and there is no subgroup $H'$ satisfying $H \subsetneq H' \subsetneq K$.
The lattice is arranged into levels where each subgroup on any given level is only connected to subgroups on the levels immediately above and below it.
{\bf Right}: In contrast with \cref{fig:recursion},
the subproblem lattice of recovering a length 30 integer signal.
The identical lattice structure between the two diagrams suggest how the correspondence between subgroups and subproblems makes a dynamic programming algorithm efficient.
}
\label{fig:z30}
\end{figure}

\Cref{alg:memo_1D} implements one-dimensional inversion with dynamic programming.
The iteration order through the subproblems in Line~\ref{ln:memo1D_loop} sorts the divisors $N'$ of $N$ in increasing order.
Since $d \divs N'$ only if $d \le N'$, this ensures that the subproblems for all proper divisors of $N'$ have been solved prior to subproblem $N'$ itself. 
Therefore, each step has all decimated signals required to set up the ILP in \cref{eq:ip_better}.
Line~\ref{ln:memo1D_ip} solves the ILP, storing the result as the signal decimated to length $N'$ in ${\bf x}^{(N')}$ for future use.
After solving the subproblem for each divisor of $N$, the inverted signal is given as ${\bf x}^{(N)}$, which is returned in Line~\ref{ln:memo1D_ret}.

While \cref{alg:memo_1D} decreases the search space of each subproblem relative to an exhaustive search over the entire problem in \cref{eq:ip_form}, this comes at the potential cost of solving a larger number of ILPs.
To see that introducing the additional subproblems generally improves the overall complexity, consider the example of a length $N = 30$ binary signal ${\bf x}$. 
Recovering ${\bf x}^{(6)}$ (and thus also its subproblems ${\bf x}^{(1)}$, ${\bf x}^{(2)}$ and ${\bf x}^{(3)}$) is trivial.
Since uniqueness requires sampling $\tilde{x}_0$,
$\tilde{x}_1$,
$\tilde{x}_2$,
and $\tilde{x}_3$,
conjugate symmetry determines the remaining 
DFT coefficients, 
$\tilde{x}_4 = \tilde{x}_2^*$ and $\tilde{x}_5 = \tilde{x}_1^*$.
The binary constraint on ${\bf x}$ implies the entries of ${\bf x}^{(5)}$ are between 0 and 6.
The nullity of the ILP matrix for this subproblem is $\phi(5) - 2 = 2$.
The size of the search space for this subproblem is thus $7^2$. 
Similarly, the entries of ${\bf x}^{(10)}$ are between 0 and 3 and the entries of ${\bf x}^{(15)}$ are between 0 and 2. 
Since $\phi(10) - 2 = 2$ and $\phi(15) - 2 = 6$, 
these subproblems have size $4^{2}$ and $3^{6}$, respectively.
Finally, since $\phi(30) - 2 = 6$, the recovery of the binary signal ${\bf x}^{(30)}$ has a search space of size $2^{6}$. 
The sum of the search space sizes is $7^2 + 4^2 + 3^6 + 2^6 = 858$.
On the other hand, as $30-2\tau(3)+2=16$, an exhaustive search of \cref{eq:ip_form} would have a much larger search space of size $2^{16} = 65,536$.

\begin{algorithm}[htb]
\caption{Memoized 1D Inversion}
\label{alg:memo_1D}

\begin{algorithmic}[1]
    \Require DFT coefficients $\tilde{x}_{d}$ for each divisor $d$ of $N$
    \For{$N'\in\Call{Divisors}{N}$} \label{ln:memo1D_loop} \Comment{Includes $N$, iterate in increasing order.}
        \State $\set{p_1, \ldots, p_\omega} \gets \Call{PrimeFactors}{N'}$
        \State ${\bf x}^{(N')} \gets $ the solution to the ILP, \label{ln:memo1D_ip}
        \Statex \makebox{%
            $\begin{NiceArray}{[ccc]c[c]c}
                \idmat{N'/p_1} & \cdots & \idmat{N'/p_1} & \Block{4-1}{{\bf x} = } & {\bf x}^{(N'/p_1)} & \Block{4-1}{, \qquad {\bf x} \in \mathbb{Z}^{N'} \ .}\\
                \vdots & \ddots & \vdots & & \vdots & \\
                \idmat{N'/p_{\omega}} & \cdots & \idmat{N'/p_{\omega}} & & {\bf x}^{(N'/p_{\omega})} \\
                \cmidrule(lr){1-3}
                \cmidrule(lr){5-5}
                \uroot{N'}{0} & \cdots & \uroot{N'}{(N' - 1)} & & \tilde{x}_{N/N'} \\
            \end{NiceArray}$
        }
    \EndFor
    \State \Return ${\bf x}^{(N)}$ \label{ln:memo1D_ret}
\end{algorithmic}
    
\end{algorithm}


\subsection{2D Inversion}

We next develop a two-dimensional inversion algorithm by using the subsignal structure as a reduction to the one-dimensional problem.
\Cref{alg:2D} inverts an integer matrix ${\tt X}$ by individually inverting each of its subsignals.
Line~\ref{ln:2D_loop} iterates through representative frequencies $(k, l)$ of each DFT coefficient equivalence class.
Lines~\ref{ln:D1}-\ref{ln:D} compute the length of the $(k, l)$-subsignal as defined in \cref{eq:D_notation}.
Line~\ref{ln:2to1} inverts the $(k, l)$-subsignal by passing a minimal set of DFT coefficients to the one-dimensional inversion procedure.
Once the $(k, l)$-subsignal has been inverted, the corresponding DFT coefficients of ${\tt X}$ are updated.
Once all subsignals have been inverted, Line~\ref{ln:2D_ret} returns the inverted matrix ${\tt X}$. 
Note that in Lines~\ref{ln:2D_fillin} and \ref{ln:2D_ret}, we convert between real space and Fourier space. 
In Line~\ref{ln:2D_fillin}, $\tilde{{\bf x}}^{(k,l)}$ is computed from the reconstructed subsignal ${\bf x}^{(k,l)}$. 
To return ${\tt X}$ in Line~\ref{ln:2D_ret}, one must first take an inverse DFT of $\tilde{\tt X}$ which was compiled through the iterations of the algorithm.   
In a practical implementation, these steps are performed efficiently using fast Fourier transforms.
This takes $\bigo{D\log(D)}$ time in Line~\ref{ln:2D_fillin} and $\bigo{N_1N_2\log(N_1 + N_2)}$ time in Line~\ref{ln:2D_ret}~\cite{Cooley1965}.
As the fast Fourier transforms are much quicker than solving the NP-hard ILP at each iteration, 
conversion between real space and Fourier space makes a negligible impact on the runtime of \cref{alg:2D}.

\begin{algorithm}[htb]
\caption{High Level 2D Inversion}
\label{alg:2D}

\begin{algorithmic}[1]
\Require a representative DFT coefficient $\tilde{X}_{k, l}$ for each coefficient class
\ForAll{$(k, l) \in \Call{RepCoeffs}{N_1, N_2}$} \label{ln:2D_loop}
    \State $D_1 \gets N_1 / \gcd(k, N_1)$ \label{ln:D1}
    \State $D_2 \gets N_2 / \gcd(l, N_2)$ \label{ln:D2}
    \State $D \gets \lcm(D_1, D_2)$ \label{ln:D}
    \State ${\bf x}^{(k, l)} \gets \Call{Invert1D}{\set{\tilde{X}_{\lambda k, \lambda l} : \lambda \divs D}}$ \label{ln:2to1}
    \For{$0 \le \lambda < D$}
        \State $\tilde{X}_{\lambda k, \lambda l} \gets \tilde{x}_\lambda^{(k, l)}$ \label{ln:2D_fillin}
    \EndFor
\EndFor
\State \Return ${\tt X}$ \label{ln:2D_ret}
\end{algorithmic}

\end{algorithm}

In one dimension, the correspondence between (cyclic) subgroups of $\mathbb{Z}_N$ and sets of equivalent DFT coefficients revealed the lattice structure of subproblems.
We can generalize this subproblem structure to two dimensions by considering the correspondence in \cref{cor:coeff_count} between cyclic subgroups of $\mathbb{Z}_{N_1} \times \mathbb{Z}_{N_2}$ and DFT coefficient classes of an $N_1 \times N_2$ integer matrix.
\Cref{fig:z4xz6} shows the lattice of cyclic subgroups of $\mathbb{Z}_4 \times \mathbb{Z}_6$. 
As in the one-dimensional case, for $(k', l') \in \langle (k, l) \rangle$, the subgroups $\langle (k, l) \rangle$ and $\langle (k', l') \rangle$ are connected in the lattice if and only if $(k', l') = p(k, l)$ for some prime $p$ dividing the order of $(k, l)$.
Thus, solutions to connected subproblems lower in the lattice are required to set up \cref{eq:ip_better}.
The subproblem lattice reveals the computational redundancy arising from repeated subproblems in the naive divide-and-conquer implementation in \cref{alg:2D}.

\Cref{alg:2D} admits two types of redundancies.
The first comes from explicitly solving subproblems that are also solved implicitly by the one-dimensional inversion.
To invert a $4\times 6$ integer matrix, the outer for loop of \cref{alg:2D} solves a one-dimensional inversion problem for each vertex of the lattice in \cref{fig:z4xz6}.
Consider the iteration where the $(k, l) = (0, 1)$ subsignal is recovered.  
Within the one-dimensional inversion problem of this subsignal ${\bf x}^{(0, 1)}$ of length 6, we will recover $\big({\bf x}^{(0, 1)}\big)^{(3)}$, which is this signal decimated by 2. 
This decimated signal is exactly the subsignal ${\bf x}^{(0, 3)}$. 
However, \cref{alg:2D} also explicitly recovers the $(0, 3)$-subsignal at a different iteration of the loop in Line~\ref{ln:2D_loop} when $(k,l)=(0,3)$.

The second type of redundancy comes from overlapping subproblems in the recovery of the one-dimensional $(k, l)$- and $(k', l')$-subsignals of ${\tt X}$.
Consider $(k, l) = (1, 1)$ and $(k', l') = (1, 2)$. 
The cyclic subgroup lattice shows that both $\langle (k, l) \rangle$ and $\langle (k', l') \rangle$ contain $\langle (2, 2) \rangle=\set{(2, 2), (2, 4)}$.
Thus, recovering both the $(1, 1)$-subsignal and the $(1, 2)$-subsignal implicitly reconstructs the length 6 $(2, 2)$-subsignal.
As in the one-dimensional case, the lattice structure of the subproblems implies that a dynamic programming implementation can eliminate these redundancies.

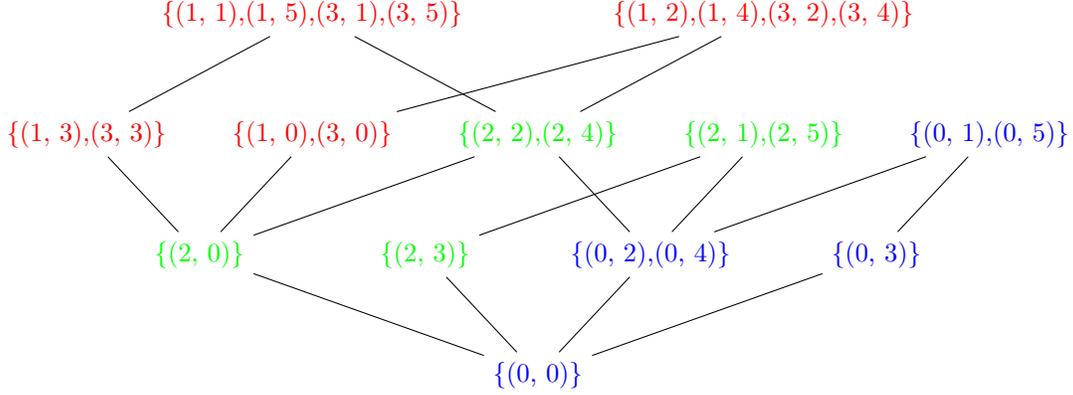
\begin{figure}[htb]
\centering
\def\fheight{0.8}

\begin{tikzpicture}
\foreach \n/\x/\y/\col/\group in {
0/0.0/0/blue!70!black/{(0, 0)},
8/-4.5/2/green!50!black/{(2, 0)},
11/-1.5/2/green!50!black/{(2, 3)},
2/1.5/2/blue!70!black/{(0, 2),(0, 4)},
3/4.5/2/blue!70!black/{(0, 3)},
1/-6.0/4/blue!70!black/{(0, 1),(0, 5)},
4/-3.0/4/red!70!black/{(1, 0),(3, 0)},
7/0.0/4/red!70!black/{(1, 3),(3, 3)},
9/3.0/4/green!50!black/{(2, 1),(2, 5)},
10/6.0/4/green!50!black/{(2, 2),(2, 4)},
5/-3.0/6/red!70!black/{(1, 1),(1, 5),(3, 1),(3, 5)},
6/3.0/6/red!70!black/{(1, 2),(1, 4),(3, 2),(3, 4)}%
} { \node[color=\col] (\n) at (\x,\y*\fheight) {$\set\group$}; }
\foreach \a/\b/\colo/\colr in {
1/2/red!70!black,
10/2/red!70!black,
10/8/red!70!black,
11/0/red!70!black,
2/0/red!70!black,
9/2/red!70!black,
8/0/red!70!black,
3/0/red!70!black,
5/7/red!70!black,
6/4/red!70!black,
5/10/red!70!black,
6/10/red!70!black,
4/8/red!70!black,
9/11/red!70!black,
1/3/red!70!black,
7/8/red!70!black%
} { \draw[color=black] (\a) -- (\b); }
\end{tikzpicture}
\vspace{-2.5pc}
\caption{
The Hasse diagram of the cyclic subgroup lattice of $\mathbb{Z}_{4} \times \mathbb{Z}_{6}$.
Only the generators of each subgroup are shown explicitly, as all other elements are contained in a proper subgroup lower in the lattice.
Color of vertices of $\langle(k, l)\rangle$ indicates iteration of loop in Line~\ref{ln:outer} of \cref{alg:memo_2D} where the $(k, l)$-subsignal is recovered ($D_1=1$ is blue, $D_1=2$ is green, and $D_1=4$ is red). 
}
\label{fig:z4xz6}
\end{figure}

Such an implementation is described in \cref{alg:memo_2D}. 
In Lines~\ref{ln:outer} and \ref{ln:inner}, the algorithm loops over pairs of divisors $D_1$ and $D_2$ of $N_1$ and $N_2$, respectively.
This orders the frequencies so that solutions to previous subproblems are available as needed.
Line~\ref{ln:loop_memo2D} iterates through the representative frequencies $(k, l)$ corresponding to the divisors $D_1$ and $D_2$. 
The order of this iteration is arbitrary.
In Line~\ref{ln:ip}, each $(k, l)$-subsignal is recovered by solving the ILP in \cref{eq:ip_better}.
Using the ILP solution, Line~\ref{ln:memo2D_fillin} updates the DFT coefficients of ${\tt X}$ from the  coefficient class represented by $(k, l)$.
Finally, Line~\ref{ln:memo2D_ret} returns the inverted matrix ${\tt X}$.
Note that \cref{alg:memo_2D} exhibits neither type of redundancy observed in \cref{alg:2D}.
\Cref{alg:memo_2D} does not use a one-dimensional inversion subroutine, 
which eliminates the first redundancy type.
The second type is handled by solving exactly one ILP for each coefficient class.

Next, we verify the correctness of the dynamic programming iteration order in \cref{alg:memo_2D}.
Fix a representative frequency $(k, l)$ and consider the iteration of the loops in 
Lines~\ref{ln:outer} and \ref{ln:inner}
that recovers the $(k, l)$-subsignal.
The condition in Line~\ref{ln:loop_memo2D} implies that this iteration satisfies $D_1 = N_1/\gcd(k, N_1)$ and $D_2 = N_2/\gcd(l, N_2)$.
For each prime factor $p$ of $D$, the $(k, l)$-subsignal decimated by $p$ in frequency space is given by
$({\bf x}^{(k, l)})^{(D/p)} = {\bf x}^{(pk, pl)}$.
To check that the $(pk, pl)$-subsignal was recovered prior to the current iteration, we consider two cases for the prime factor $p$.
If $p \divs N_1$, then $\gcd(pk, N_1) = p\gcd(k, N_1) = N_1/(D_1/p)$.
Since $D_1/p < D_1$, the $(pk, pl)$-subsignal was recovered at a previous iteration of the for loop in Line~\ref{ln:outer}.
If $p \ndivs N_1$, then $\gcd(pk, N_1) = \gcd(k, N_1)$.
Additionally, $p$ must divide $N_2$, so $\gcd(pl, N_2) = p\gcd(l, N_2) = N_2/(D_2/p)$.
Since $D_2/p < D_2$, the $(pk, pl)$-subsignal was recovered at the current iteration of the for loop in Line~\ref{ln:outer} and a previous iteration of the for loop in Line~\ref{ln:inner}. 
Note that since $D \divs \lcm(N_1, N_2)$, its prime factor $p$ must divide $N_1$ or $N_2$, so this argument shows that each required decimated subsignal is available.

\begin{algorithm}[htb]
\caption{Memoized 2D Inversion}
\label{alg:memo_2D}
\begin{algorithmic}[1]

\Require a representative DFT coefficient $\tilde{X}_{k, l}$ for each coefficient class
\For{$D_1 \in \Call{Divisors}{N_1}$} \label{ln:outer}
\For{$D_2 \in \Call{Divisors}{N_2}$} \label{ln:inner}
    \State $D \gets \lcm(D_1, D_2)$
    \State $\set{p_1, \ldots, p_\omega} \gets \Call{PrimeFactors}{D}$
    \ForAll{$(k, l) \in \Call{RepCoeffs}{N_1, N_2} : \gcd(k, N_1) = N_1/D_1, \gcd(l, N_2) = N_2/D_2$} \label{ln:loop_memo2D}
        \State ${\bf x}^{(k, l)} \gets$ the solution to the ILP, \label{ln:ip}
        \Statex \makebox{%
            $\begin{NiceArray}{[ccc]c[c]c}
                \idmat{D/p_1} & \cdots & \idmat{D/p_1} & \Block{4-1}{{\bf x} = } & {\bf x}^{(p_1k,p_1l)} & \Block{4-1}{,\qquad{\bf x} \in \mathbb{Z}^D\ .}\\
                \vdots & \ddots & \vdots & & \vdots & \\
                \idmat{D/p_{\omega}} & \cdots & \idmat{D/p_{\omega}} & & {\bf x}^{(p_{\omega}k,p_{\omega})} \\
                \cmidrule(lr){1-3}
                \cmidrule(lr){5-5}
                \uroot{D}{0} & \cdots & \uroot{D}{(D - 1)} & & \tilde{X}_{kl} \\
            \end{NiceArray}$
        }
        \For {$0 \le \lambda < D : \gcd(\lambda, D) = 1$} 
            \State $\tilde{X}_{\lambda k, \lambda l} \gets \tilde{x}_\lambda^{(k, l)}$ \label{ln:memo2D_fillin}
        \EndFor
    \EndFor
    \EndFor
\EndFor

\State \Return ${\tt X}$ \label{ln:memo2D_ret}
\end{algorithmic}

\end{algorithm}

Unlike the one-dimensional dynamic programming \cref{alg:memo_1D}, 
\cref{alg:memo_2D} does not completely explore each lattice layer before moving onto the next.
The double for loop through pairs of divisors performs more of a depth-first traversal of the subproblem lattice.
The cyclic subgroup lattice in \cref{fig:z4xz6} illustrates the subproblem order for inverting a $4 \times 6$ integer matrix.
The lattice vertices are colored according to which iteration of the outer for loop in Line~\ref{ln:outer} solves the subproblem.  
For example, the blue sublattice is recovered in the first iteration of the outer for loop, where $D_1 = 1$ (so the condition $\gcd(k, N_1) = N_1/D_1$ in Line~\ref{ln:loop_memo2D} implies $k = 0$).
The divisors of $6$ are $D_1 = 1, 2, 3, 6$, so the subproblems in this sublattice are solved in the order,
\begin{equation*}
    \set{(0, 0)}, \set{(0, 3)}, \set{(0, 2), (0, 4)}, \set{(0,1),(0,5)}.
\end{equation*}
At the next iteration of the outer for loop, $D_1 = 2$ and the green sublattice is recovered.
The red sublattice is recovered in the final iteration of the outer for loop, where $D_1 = 4$.

Note that \cref{alg:memo_2D} is not significantly more efficient than ILP for the $4 \times 6$ example considered above.
Since there are 12 cyclic subgroups in \cref{fig:z4xz6}, there are 12 classes of DFT coefficients. 
In particular, the two-dimensional analog of the ILP in \cref{eq:ip_form} has rank $2 \cdot 12 - 4 = 20$.
Since there are only four free variables in this system, a brute-force search is tractable.
To demonstrate the computational speedup of \cref{alg:memo_2D} in general, consider a $30 \times 30$ binary matrix.
A $30 \times 30$ integer matrix has 140 coefficient equivalence classes, so we will not show the size of every subproblem.
As one example, consider recovery of the $(2, 6)$-subsignal.
For the $(k, l) = (2, 6)$ frequency, we have $D_1 = N_1/\gcd(k, N_1) = 15$, $D_2 = N_2/\gcd(l, N_2) = 5$, and $D = \lcm(5, 15) = 15$.
Each entry of the $(2, 6)$-subsignal is thus a sum of $N_1N_2/D = 60$ entries of ${\tt X}$, and thus has 61 possible values.  As the nullity of the relevant matrix is $\phi(15) - 2 = 6$, we have a total search space size of $61^{6}$ = 5.15e10 for this subproblem.
Repeating this computation for each of the 140 subproblems yields a total search space size of 4.88e11.
The two-dimensional analogue of the ILP in \cref{eq:ip_form} has a search space size of $2^{900 - 140 \cdot 2 + 4} = 2^{624}$ = 6.96e187.

\delimitershortfall=5pt 


\section{Numerical Reconstructions} \label{sec:numerics}

We now demonstrate the substantial advantage of our divide-and-conquer approach numerically.
While \cref{alg:memo_2D} is an improvement over naive ILP approaches, 
each iteration still involves solving a smaller ILP. 
We thus begin by presenting two approaches for solving the ILPs.

\subsection{Implementation} \label{sec:implement}

We first solve the subproblem ILPs directly by supplying the feasibility problem \cref{eq:ip_better} to Gurobi's mixed integer programming branch-and-cut optimizer~\cite{gurobi,padberg1991}.
Since our inverse problem ILP is a feasibility problem,
we set the Gurobi \texttt{MIPFocus} parameter to prioritize searching for feasible solutions. 
For each subproblem,
we set the Gurobi \texttt{WorkLimit} parameter to 100,
which gives a deterministic time limit.
In all of our tests,
we assume that we know \textit{a priori} that the entries of the image are bounded by $0 \le X_{mn} \le L$, for some known bound $L$.
Our ILP implementation of \cref{alg:memo_2D} applies this assumption to all ILP subproblems,
using the bound $L=N_1N_2/D$
when solving for the $(k, l)$-subsignal of length $D$.

Our second approach solves the subproblem ILPs using a lattice problem reformulation.
For a set of $d$ linearly independent vectors $\set{{\bf b}_1, \dots, {\bf b}_d} \subseteq \R^n$,
the collection of integer linear combinations forms a discrete subgroup of $\R^n$,
\begin{equation}
    \mathcal{L}({\bf b}_1, \dots, {\bf b}_d)
    = \set*{\sum_{i = 1}^d \alpha_i{\bf b}_i \given {\bm \alpha} \in \Z^d}.
\end{equation}
The set $\mathcal{L}$ is called a $d$-dimensional lattice,
and the vectors $\set{{\bf b}_1, \dots, {\bf b}_d}$ are a basis for $\mathcal{L}$.
As $\mathcal{L}$ is discrete,
it has a shortest nonzero vector,
although finding the exact shortest vector is a computationally hard problem~\cite{ajtai1998shortest,micciancio2001shortest}.
The Lenstra-Lenstra-Lov\'asz (LLL) algorithm finds a basis of short,
nearly orthogonal vectors for an input lattice,
which provides a sufficiently short approximation to the shortest vector for many practical applications~\cite{nguyen2006average},
and crucially runs in polynomial-time~\cite{stehle2009floating}.

Consider a general feasibility ILP of finding a vector ${\bf x} \in \Z^n$,
such that ${\tt A}{\bf x} = {\bf b}$, 
where ${\tt A}$ is an $m \times n$ matrix of constraints.
We reformulate this problem as a lattice problem by constructing the basis given by the columns of the $(n + m) \times (n + 1)$ block matrix,
\begin{equation} \label{eq:lattice_basis}
    {\tt B} = \begin{bmatrix}
        \idmat{n}& {\tt 0} \\
        \beta {\tt A} & -\beta{\bf b}
    \end{bmatrix},
\end{equation}
where the numerical parameter $\beta$ is taken to be large.
Any vector in $\mathcal{L}({\tt B})$ takes the form ${\tt B}\begin{bmatrix}
    {\bm \alpha} & \gamma
\end{bmatrix}^T
= \begin{bmatrix}
    {\bm \alpha} & \beta({\tt A}{\bm\alpha} - \gamma{\bf b})
\end{bmatrix}^T$,
where ${\bm \alpha} \in \Z^n$ and $\gamma \in \Z$ are the lattice coefficients.
As $\beta$ is large, a short lattice vector must satisfy 
${\tt A}{\bm \alpha} = \gamma{\bf b}$, since otherwise the final $m$ coordinates will dominate its norm.  
If $\gamma = 1$,
this gives a solution ${\bm \alpha}$ to the ILP.
In the case when ${\bf A}$ and ${\bf b}$ are integer-valued, the work of \cite{Aardal2000} derives sufficient bounds on $\beta$ for the LLL algorithm to return such a solution. 
When applied to the basis in \cref{eq:lattice_basis},
the LLL runtime is bounded by,
\begin{equation}
\label{eq:lll_runtime}
   \bigo{n^4(n + m)(n + \log B)\log B} ,  
\end{equation}
where $B$ bounds the Euclidean norms of the columns of ${\tt B}$~\cite{nguyen2009lll}.

We remark that branch-and-cut and the lattice formulation provide two distinct methods to solve a given feasibility ILP. 
In general,
it is difficult to know which method will perform better,
and each has its own strengths and weaknesses.
The size of the branch-and-cut tree scales with the magnitude of bounds on the integers,
making the performance of direct ILP methods highly dependent on these bounds.
On the other hand,
LLL does not explicitly exploit variable bounds,
and its performance is relatively indifferent to their actual magnitude.
In our setting, however, the density of the cyclotomic integers in $\C$ makes the ILP constraints unstable.  Therefore, scaling by a large value of $\beta$ amplifies errors in the DFT coefficient data, making the lattice formulation less robust to noise.  
Thus,
although LLL has polynomial runtime and often performs well in practice, direct ILP methods may be preferable 
when tight variable bounds are available and the data precision is limited~\cite{levinson2023}.

\subsection{Results}

We first investigate the limitations of standard ILP algorithms.
\Cref{tab:rect} presents numerical results comparing \cref{alg:memo_2D} with naively solving the ILP in \cref{eq:ip_form}.
For several different image sizes $N_1 \times N_2$,
the table reports
the maximal subsignal size $N = \lcm(N_1, N_2)$,
the number of DFT coefficients in a minimal data set,
and, for each algorithm, 
the recovery rate of 20 random binary matrices and the average runtime on a single core of a standard workstation.
The middle columns ``Naive (ILP)" and ``\cref{alg:memo_2D} (ILP)" show the limitations of both ILP-based approaches, 
which become slow for even moderately sized images.  
However, with the chosen ILP parameters, \cref{alg:memo_2D} successfully reconstructs its largest image ($24\times24$) in about 90 minutes on a single core of the standard workstation, whereas the naive ILP approach fails.
\Cref{alg:memo_2D} also performs substantially better on rectangular instances such as  $9\times 11$, $10\times 15$, and $10\times 16$.

\begin{table}[htbp]
\centering

\crefname{algorithm}{Alg}{Alg}
\setlength{\tabcolsep}{3pt}

\begin{tabular}{c@{\hspace{15pt}}S[table-format=2]S[table-format=3]|S[table-format=3]S[table-format=4.1]|S[table-format=3]S[table-format=5.1]|S[table-format=3]S[table-format=3.2]|S[table-format=3]S[table-format=0.2]}
\multicolumn{3}{c|}{Algorithm}& \multicolumn{2}{c|}{Naive (ILP)}& \multicolumn{2}{c|}{\cref{alg:memo_2D} (ILP)}& \multicolumn{2}{c|}{Naive (LLL)}& \multicolumn{2}{c}{\cref{alg:memo_2D} (LLL)}\\
{$N_1 \times N_2$} & \multicolumn{1}{c}{$N$} & \multicolumn{1}{c|}{Coeffs, \#} & \multicolumn{1}{c}{Rec, \%} & \multicolumn{1}{c|}{$t$, sec} & \multicolumn{1}{c}{Rec, \%} & \multicolumn{1}{c|}{$t$, sec} & \multicolumn{1}{c}{Rec, \%} & \multicolumn{1}{c|}{$t$, sec} & \multicolumn{1}{c}{Rec, \%} & \multicolumn{1}{c}{$t$, sec}\\
\midrule\midrule
$\phantom{0}9 \times 11\phantom{}$ & 99 & 6 & 0 & 585.4  & 85 & 709.8  & 40 & 0.18  & 80 & 0.24 \\
$\phantom{}10 \times 15\phantom{}$ & 30 & 28 & 95 & 146.3  & 100 & 15.9  & 100 & 3.06  & 100 & 0.03 \\
$\phantom{}10 \times 16\phantom{}$ & 80 & 20 & 0 & 353.3  & 90 & 350.0  & 0 & 0.96  & 100 & 0.29 \\
$\phantom{}11 \times 11\phantom{}$ & 11 & 13 & 100 & 1.1  & 90 & 5930.8  & 100 & 0.56  & 100 & 0.01 \\
$\phantom{}12 \times 12\phantom{}$ & 12 & 50 & 100 & 0.0  & 100 & 0.1  & 100 & 0.69  & 100 & 0.01 \\
$\phantom{}12 \times 30\phantom{}$ & 60 & 60 & 0 & 337.4  & 5 & 2580.5  & 0 & 27.62  & 100 & 0.38 \\
$\phantom{}13 \times 13\phantom{}$ & 13 & 15 & 100 & 8.6  & 0 & 1245.6  & 100 & 1.51  & 100 & 0.01 \\
$\phantom{}16 \times 16\phantom{}$ & 16 & 46 & 95 & 66.0  & 100 & 399.9  & 100 & 15.82  & 100 & 0.02 \\
$\phantom{}17 \times 17\phantom{}$ & 17 & 19 & 0 & 250.6  & 0 & 963.0  & 100 & 5.81  & 100 & 0.02 \\
$\phantom{}18 \times 18\phantom{}$ & 18 & 68 & 100 & 24.7  & 100 & 42.7  & 100 & 39.82  & 100 & 0.04 \\
$\phantom{}21 \times 21\phantom{}$ & 21 & 45 & 0 & 260.6  & 0 & 863.5  & 0 & 57.74  & 100 & 0.07 \\
$\phantom{}24 \times 24\phantom{}$ & 24 & 110 & 0 & 284.0  & 100 & 5316.6  & 0 & 235.52  & 100 & 0.11 \\
$\phantom{}30 \times 30\phantom{}$ & 30 & 140 & 0 & 162.9  & 0 & 780.8  & 0 & 160.14  & 100 & 0.33 \\
\end{tabular}

\crefname{algorithm}{Algorithm}{Algorithm}
\setlength{\tabcolsep}{6pt}

\caption{
For various matrix dimensions $N_1 \times N_2$:
the number of DFT coefficients required for uniqueness,
the percentage of test signals recovered, and
the average time spent
for each inversion algorithm implementation.
For each matrix size, the test set consisted of 20 random binary matrices.
}

\label{tab:rect}
\end{table}

In contrast,
\cref{alg:memo_2D} performs worse than the naive ILP approach for smaller square dimensions in \cref{tab:rect} ($11\times 11$, $12 \times 12$, and $13 \times 13$).
In these cases, the naive method
benefits from reduced computational overhead, 
as it solves a single ILP with strict binary constraints on the unknowns.  
We note that both algorithms performed particularly well for the $12 \times 12$ images,
because the corresponding ILPs are only mildly underdetermined.  A $12\times12$ image has 74 independent DFT coefficients after accounting for conjugate symmetry, of which the naive algorithm samples 50.  Likewise, \cref{alg:memo_2D} recovers each length 12 subsignal from 6 of its 7 independent DFT coefficients.  In both cases, only a small number of unknown coefficients remain, making the resulting ILPs comparatively easy to solve.
For the $13 \times 13$ case,
\cref{alg:memo_2D} particularly struggled due to the 14 independent subproblems of dimension 13, 
each with looser bounds on the variables (between 0 and 13).
We hypothesize that these relaxed constraints are less effective at enabling the computational reductions typically achieved by ILP heuristics and cutting plane methods. 
\Cref{tab:L} confirms this empirically by investigating the dependence of each algorithm's performance on the integer bound $L$.
We see that the naive ILP method is faster for $7 \times 7$ images with small integer entries,
but \cref{alg:memo_2D} quickly out-scales the naive method as $L$ increases.

\begin{table}[htbp]
    \centering

    \crefname{algorithm}{Alg}{Alg}
\begin{tabular}{c|S[table-format=4.2, table-space-text-post = {*}]S[table-format=2.2, table-space-text-post = {*}]S[table-format=1.4, table-space-text-post = {*}]S[table-format=1.4, table-space-text-post = {*}]}
L & \multicolumn{1}{c}{Naive (ILP)} & \multicolumn{1}{c}{\cref{alg:memo_2D} (ILP)} & \multicolumn{1}{c}{Naive (LLL)} & \multicolumn{1}{c}{\cref{alg:memo_2D} (LLL)}\\
\midrule\midrule
1 & 0.02 & 0.25 & 0.0165 & 0.0029 \\
2 & 0.18 & 1.12 & 0.0166 & 0.0030 \\
3 & 0.58 & 3.53 & 0.0164 & 0.0029 \\
4 & 2.11 & 11.09 & 0.0165 & 0.0030 \\
5 & 12.71 & 23.37 & 0.0165 & 0.0029 \\
6 & 172.20 & 54.50 & 0.0164 & 0.0029 \\
7 & 1817.27 & 95.49 & 0.0165 & 0.0029 \\
\end{tabular}
\crefname{algorithm}{Algorithm}{Algorithm}

    \caption{For increasing values of $L$:
    the average time taken by each inversion implementation to recover a test signal of size $7 \times 7$ with integer entries distributed uniformly in $[0, L]$.
    For each $L$, the test set contained 20 random matrices.
    Note that the ILP parameter \texttt{WorkLimit} was increased to 5000 for this numerical test.
    }
    \label{tab:L}
\end{table}

We emphasize that the nature of the uniqueness results in \cref{thm:dep2D,thm:amb2D}, together with the subgroup lattice structure, makes the algorithm’s performance highly dependent on the image dimensions $N_1$ and $N_2$. In the numerical experiments, the number of DFT measurements in the minimal data set naturally correlates with recovery success.
For example, consider the cases $N_1 \times N_2 = 16 \times 16, 17 \times 17,$ and $18 \times 18$.
At 18.0\% and 21.0\% respectively,
the $16 \times 16$ and $18 \times 18$ images have relatively larger data sets than the $17 \times 17$ at 15.2\%.
Indeed, both the naive and divide-and-conquer ILP approaches reliably recovered the $16 \times 16$ and $18 \times 18$ test images,
while failing to recover any of the $17 \times 17$ tests within the time limit.

Comparing recovery times for \cref{alg:memo_2D} using ILP,
the $18 \times 18$ test images were recovered much faster than the $12 \times 30$ images,
despite having similar sizes and a similar number of subproblems.
We attribute this difference to the sizes of the ILP search space in each case.
A binary $12 \times 30$ image has 360 pixels,
so each entry of the length 60 subsignals have entries between 0 and 6.
Since $\phi(60) - 2 = 22$,
the search space associated with recovering these maximal subsignals has size $6^{22} = $ 1.31e17.
On the other hand, 
for a binary $18 \times 18$ image with $324$ entries,
the largest subsignal of length 18 has a search space size of $18 ^ {4} = $ 1.05e5.
Similarly,
we can explain the failure of \cref{alg:memo_2D} using ILP to recover any $21 \times 21$ test matrix while successfully recovering all $24 \times 24$ test signals by the relative search space sizes of $21^{12} = $ 7.36e15 and $24^{6} = $ 1.91e8, respectively, for each maximal subproblem. 

In our testing,
we observed a strong correlation between the number of branch-and-cut nodes explored by the ILP solver and the size of the search space,
although heuristics and cutting planes allow the solver to find the unique solution by exploring only a fraction of the search space.
These ILP heuristics appear most effective for smaller values of $L$, 
as evidenced by \cref{tab:L}. 
This also explains why \cref{alg:memo_2D} successfully inverted some $12\times 30$ test images but none of the size $21\times 21$ tests. 
Although the largest subproblems of the latter case have a smaller search space, they also have a looser bound on the unknowns ($L=21$ as opposed to $L=6$).

Motivated by the overall limitations of these ILP results, we employed the reformulation of \cref{eq:ip_better} as a lattice reduction problem.
We apply the LLL algorithm to the associated lattice bases in \cref{eq:lattice_basis} for each of the ILPs in \cref{eq:ip_form,eq:ip_better}~\cite{Lenstra1982}. 
Our numerical implementations are available at \cite{intvert}.
The recovery data reported in the last two columns of \cref{tab:rect}
demonstrate the immense speed and scaling benefits of implementing \cref{alg:memo_2D} with LLL.
In particular, while all four implementations successfully recovered every $18\times 18$ test image, 
the LLL-based \cref{alg:memo_2D} ran over 500 times faster than the other methods. 
Furthermore, this implementation exactly recovered the larger test images of sizes $21\times 21$, $24\times 24$, and $30\times 30$ in under one second, 
while all other implementations took much longer and/or failed.
As one outlier, 
note that the relatively prime dimensions of the $9 \times 11$ images favor ILP over LLL, where there is a subproblem of dimension 99 with $L=1$.
As previously noted,
LLL does not incorporate this tight integer bound and is sensitive to the size of the system,
while ILP is generally well-suited to the small $L$ value~\cite{levinson2023}.
This phenomenon also appears in \cref{tab:L},
where the last two columns demonstrate that the LLL implementations are relatively agnostic of the integer bound $L$.

In \cref{tab:rect}, 
our artificially imposed time limit accounted for the ILP failures,
and we believe that,
given sufficient time,
the algorithms would eventually recover the correct signal. 
On the other hand,
our LLL implementation has no time limit, and incorrect recoveries are caused by the limited stability of double precision data.

The largest ILP system for a subproblem of \cref{alg:memo_2D} has dimension $\lcm(N_1, N_2)$ whereas the naive algorithm solves a single ILP of dimension $N_1N_2$.  Since 
the LLL runtime bound \cref{eq:lll_runtime} scales as $n^6$, where $n$ is the dimension of the system, this difference in problem size explains the large runtime gap between the last two columns of \cref{tab:rect}.
Moreover, the naive LLL approach also becomes noticeably less reliable as the system dimension increases.  We hypothesize that this loss of stability is due to the  LLL approximation factor,
which grows exponentially with the dimension of the ILP system,
compounding the inherent instability of the inverse problem~\cite{kalbach2024lllalgorithmlatticebasis}.
Hence, the dynamic programming structure of \cref{alg:memo_2D} is a substantial improvement by reducing the size of the ILP subproblems, thereby improving both the runtime and stability of the LLL solver.  Combining this with the relative insensitivity of LLL to the integer bounds facilitates the immense success observed in \cref{tab:rect}.

Going beyond the moderate image sizes tested in \cref{tab:rect}, we conclude this section with a few larger, structured, nonrandom image reconstructions using \cref{alg:memo_2D} with LLL.
As an initial binary example, \cref{fig:qr} shows a $45\times45$ QR code, which is the Version 7 standard~\cite{tiwari2016}. 
We sampled a minimal set of 119 out of 2025 total DFT coefficients (5.9\%).  
The left image in \cref{fig:qr} shows the zero-filled reconstruction in real space, where all unsampled DFT coefficients are set to 0. 
Using double precision in the data, \cref{alg:memo_2D} quickly recovered the QR code exactly in 1.5 seconds. 
Note that this reconstruction did not take advantage of the fixed structure and  built-in error correction present in the QR code. 

\begin{figure}[htb]
    \centering
    \includegraphics[width=.97\textwidth]{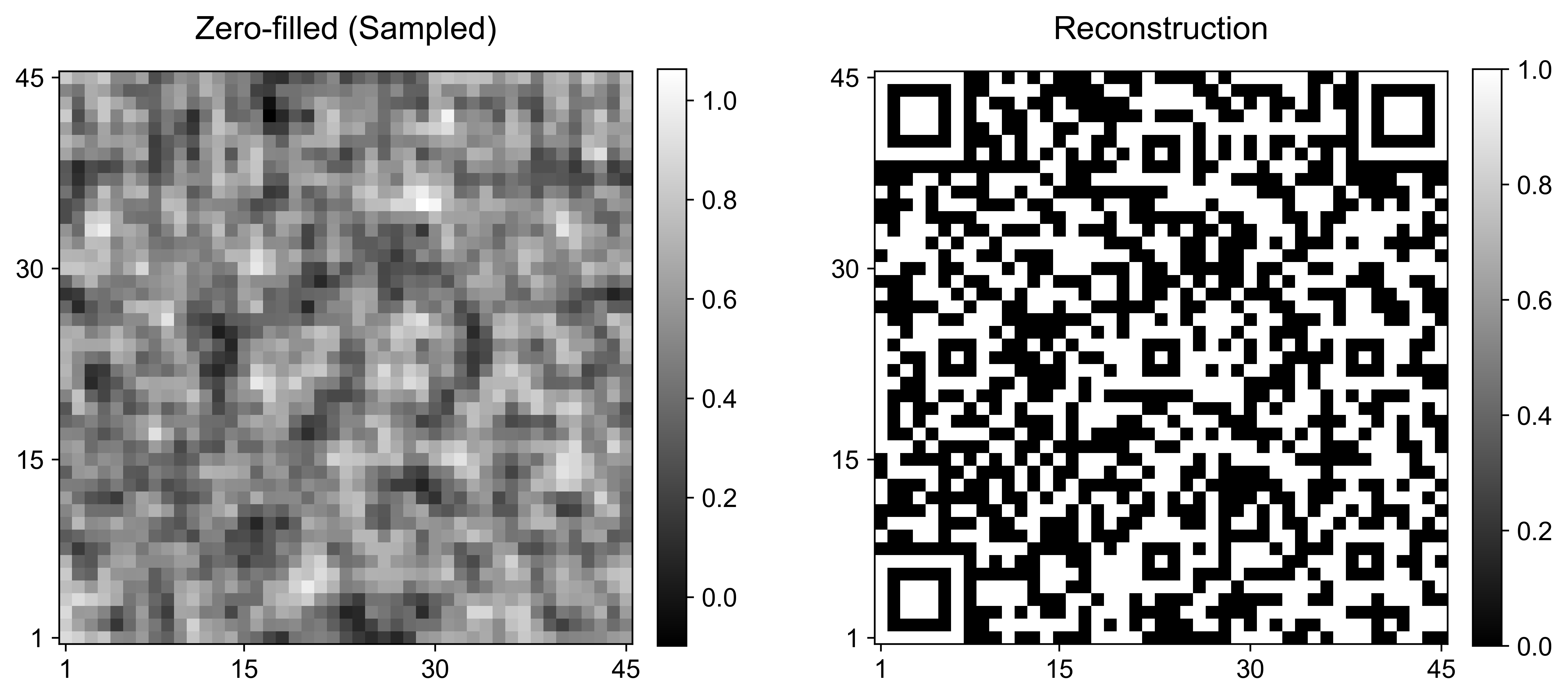}
    
    \caption{
    Using a $45 \times 45$ QR code containing the message ``Hello World!'' as a model image,
    {\bf Left}: the zero-filled reconstruction from a minimal set of sampled DFT coefficients;
    {\bf Right}: the reconstruction by \cref{alg:memo_2D} from the minimal spectrum (coincides exactly with model).
    }
    \label{fig:qr}
\end{figure}

Next, we consider a non-binary image with a larger integer range. 
We begin with the classic $256\times256$ `camera man' image, where the integer values range from 0 to 255~\cite{schreiber2005image}. 
This resolution was too large for our algorithm to handle directly, so for practicality (and taking into consideration the number of subproblems), we rescaled the image to $60\times60$. 
To ensure feasibility at double precision, 
we also rescaled the intensity values to be between 0 and $L=19$.  
The top right image in \cref{fig:images} shows this rescaled model image. 
The top left image shows the corresponding zero-filled reconstruction, obtained with the minimal 350 sampled DFT coefficients (9.7\%). 
With these parameters, \cref{alg:memo_2D} recovered the model image exactly in 6.5 seconds.  

This reconstruction pushes the limit of double precision. 
As previously discussed, 
the density of the cyclotomic integers makes the DFT coefficient constraints in \cref{eq:ip_form,eq:ip_better} unstable. 
As this instability worsens with increasing size, the importance of precision becomes greater for larger examples with LLL.  Therefore, to successfully reconstruct larger images we used multiple precision floating-point computations to ensure the unique solution was recovered.
Moving to higher precision lets us expand the integer range $L$ and work with larger images. 

In the middle row of \cref{fig:images},
we consider the original range of integer values by setting $L = 255$.
This reconstruction failed at double precision,
but succeeded in 9.1 seconds when the precision was extended to 25 digits.
Finally,
we tackled a larger image,
increasing the size to $210\times210$,
and sampling a minimal set of 1260 DFT coefficients (2.86\%).
We again consider the original range of integer values with $L = 255$.
To make this reconstruction feasible,
we had to extend the precision to 100 digits.
The reconstruction was again exact,
with no reconstruction error
and took 
187.2 minutes.

\begin{figure}
    \centering
    \includegraphics[width = 0.93\textwidth]{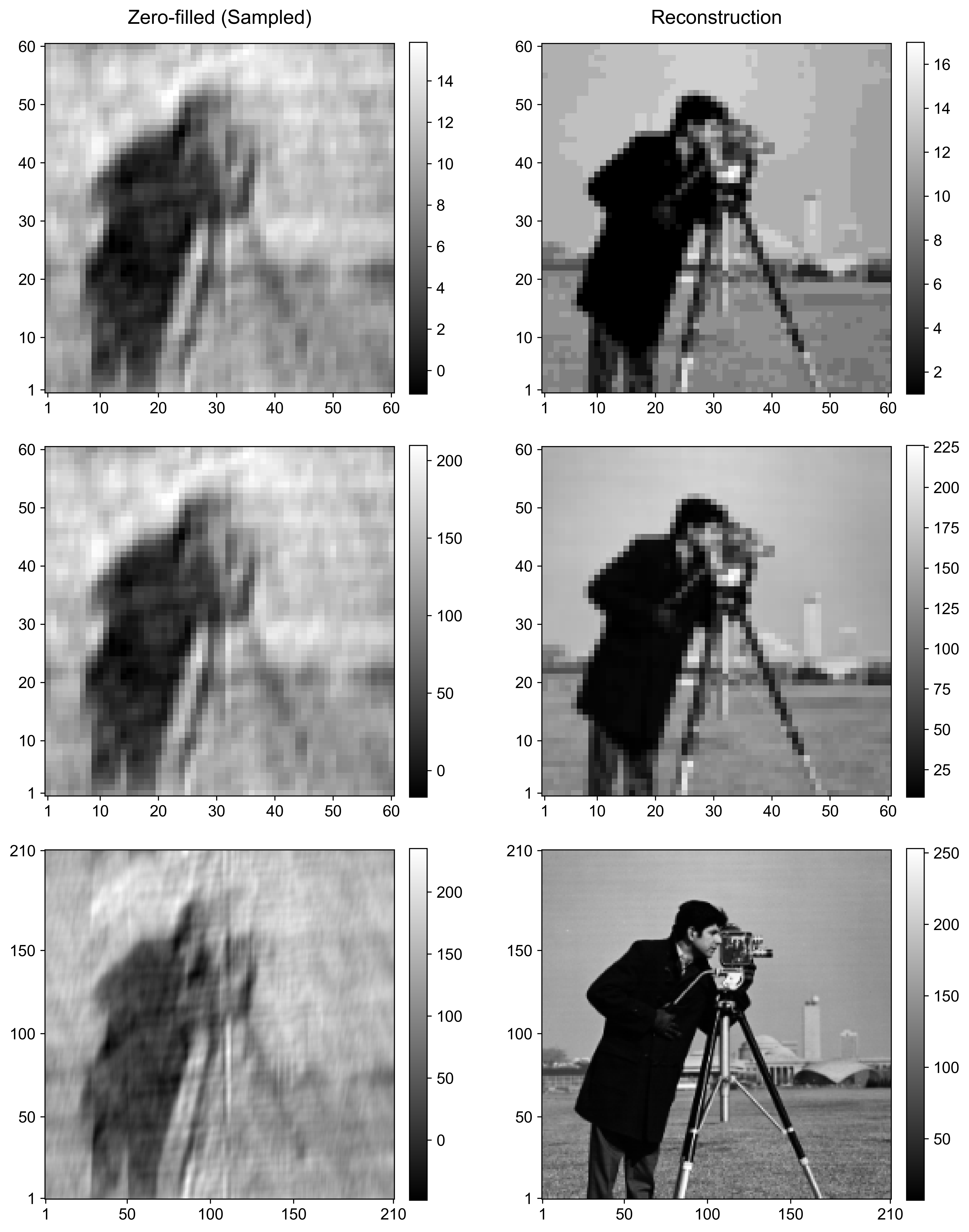}
    
    \caption{
    For three models, the left column has the zero-filled reconstruction from a minimal set of sampled DFT coefficients; the right column is the reconstruction by \cref{alg:memo_2D} from the minimal spectrum (which coincides exactly with model). 
    {\bf Top}: A $60\times60$ model with $L=19$ using double precision in the data.
    {\bf Middle}: A $60\times60$ model with $L=255$ using 25 digits of precision in the data. 
    {\bf Bottom}: A $210\times210$ model with $L=255$ using 100 digits of precision in the data.
    }
    \label{fig:images}
\end{figure}


\section{Discussion}
The results of this work demonstrate the strength of integer-valued priors, 
as they enable perfect recovery from a limited set of DFT coefficients.  
Although reconstruction chances depend on the amount of data in the minimal set,
our lattice method typically succeeded with fewer than 15\% of the coefficients measured. 
The LLL reconstruction demonstrates reasonable speed and accuracy,
but often demand high precision DFT data.
While elevated precision permits recovery of larger images, 
moving to lower precision reduces algorithm reliability.
Thus,
the presence of noise in the data can make the methods unreliable.

Additional numerical testing indicated that reconstruction of random $30\times30$ binary images was reliable with 5-6 decimal digits of precision, 
but frequently failed with fewer than 5 digits. 
While this paper focuses on reconstruction from a minimal set of noiseless coefficients,
important questions remain regarding how to stably scale to larger images and noisy data. 
A natural and practical direction is to incorporate additional data beyond the minimal sampling,
while remaining highly underdetermined.
A detailed analysis of stability, 
particularly with limited additional sampling, is warranted.  
Moreover, the LLL method depends on several parameters, and a study of their optimal selection would be valuable for practical applications.

Incorporating algorithm elements from related works could further improve the stability and speed of our methods.
Combining integer linear programming with lattice methods shows promising results,
especially when tight integer bounds are available~\cite{levinson2023}.
With an appropriate sampling strategy,
an FFT-like adaptation of the algorithms  benefits from the inclusion of many redundant DFT coefficients~\cite{pei2022binary1D}.
Additionally, the partial order on the subproblems of \cref{alg:memo_1D,alg:memo_2D} prescribes efficient ways to parallelize the algorithm implementations.
While our results demonstrate the effectiveness of integer-valued priors, a natural direction for future work is to combine these priors with other common ones, such as sparsity or connectivity.
Finally, we note that the reduction techniques developed in this work for theoretical results in \cref{thm:dep2D,thm:amb2D} and \cref{alg:memo_2D} extend naturally to higher-dimensional integer signals.

\appendix

\section{Proof of \texorpdfstring{\cref{lem:subsig}}{Lemma 3.4}} \label{ap:subsig}

\begin{proof}

We first show the existence of an $m$ and $n$ that satisfy \cref{eq:to_satisfy} for each $0 \le j < D$. 
This will show that each entry $x_j^{(k,l)}$ contains a contribution from at least one entry of ${\tt X}$.  Let $d_1=\gcd(k,N_1)=N_1/D_1$ and $d_2=\gcd(l,N_2)=N_2/D_2$, and recall the definitions from \cref{eq:D_notation}.
Repeatedly applying the relationship  $ab = \gcd(a,b)\lcm(a,b)$ and using the definition of $d_1$ and $d_2$ yields,
\begin{equation*}
    d = \frac{N}{D}
    = \frac{\lcm(N_1, N_2)}{\lcm(D_1, D_2)}
    = \frac{N_1N_2\gcd(D_1, D_2)}{D_1 D_2\gcd(N_1, N_2)} 
    = \frac{d_1d_2\gcd(N_1/d_1, N_2/d_2)}{\gcd(N_1, N_2)} \ .
\end{equation*}
Next, repeatedly applying the fact, 
$c\gcd(a, b) = \gcd(ca, cb)$,
to the expression above, first with $c=d_1d_2$ and subsequently with $c=\gcd(N_1,N_2)$ yields,
\begin{equation*}
    d
    = \frac{\gcd(N_1d_2, N_2d_1)}{\gcd(N_1, N_2)} 
    = \frac{\gcd(N_1, N_2)}{\gcd(N_1, N_2)}\gcd\left(\frac{N_1d_2}{\gcd(N_1, N_2)}, \frac{N_2d_1}{\gcd(N_1, N_2)}\right)
    = \gcd(N_1'd_2, N_2'd_1) \ ,
\end{equation*}
where in the last equality we recall that $N_t' = N_t/\gcd(N_1,N_2)$.
Thus, B\'ezout's identity gives integers $p$ and $q$ such that
\begin{equation}
\label{eq:bezout}
    p(N_2'd_1) + q(N_1'd_2) = d \ .
\end{equation}

Since $d_1 = \gcd(k, N_1)$ and $d_2 = \gcd(l, N_2)$, $k/d_1$ and $l/d_2$ must have multiplicative inverses modulo $N_1$ and $N_2$, respectively.
Thus, set $m_j = j(k/d_1)^{-1}p \bmod{N_1}$ for the row index,
and set $n_j = h(l/d_2)^{-1}q \bmod{N_2}$ for the column index.
These values imply that 
\def\blank{}
\newcommand{\shrunkmod}[1]{\hspace{-6pt}\pmod{#1}}
\begin{alignat*}{6}
    m_j(k/d_1) &= jp \shrunkmod{N_1} \blank\implies\blank &m_jk &= jpd_1 &\shrunkmod{N_1} \blank\implies\blank &&m_jkN_2' &= jpd_1 N_2' &\shrunkmod{N_1N_2'} \ ,\\
    n_j(l/d_2) &= jq \shrunkmod{N_2} \blank\implies\blank &n_jl &= jqd_2 &\shrunkmod{N_2} \blank\implies\blank &&n_jlN_1' &= jqd_2 N_1' &\shrunkmod{N_2N_1'} \ .
\end{alignat*}
Since $N = N_1N_2/\gcd(N_1,N_2) = N_1'N_2 = N_1N_2'$, we can substitute these expressions into the left hand side of \cref{eq:to_satisfy} to obtain
\begin{equation*}
    m_jkN_2' + n_jlN_1' = jpd_1N_2' + jqd_2N_1' = j(pd_1N_2' + qd_2N_1') = jd \pmod{N} \ ,
\end{equation*}
where we have used \cref{eq:bezout} in the last equality.  This shows that $(m_j, n_j)$ satisfies \cref{eq:to_satisfy}

We next show that the same number of entries of ${\tt X}$ are summed for each choice of $j$.
Equivalently,
for any value of $j$,
the same number  of pairs $(m, n)$ satisfy \cref{eq:to_satisfy}.
Let 
$H = \set{(m, n) \in \mathbb{Z}_{N_1} \times \mathbb{Z}_{N_2} : mkN_2' + nlN_1' = 0 \pmod{N}}$,
and suppose $(m, n), (m', n') \in H$.
Since $(0, 0) \in H$, and
\begin{equation*}
    (m - m')kN_2' + (n - n')lN_1' = 0 \pmod{N}\ ,
\end{equation*}
the set $H$ is a subgroup of $\mathbb{Z}_{N_1} \times \mathbb{Z}_{N_2}$.
For any fixed $j$, since 
\begin{equation*}
    (m_j + m)kN_2' + (n_j + n)lN_1' = jd \pmod{N}
    \iff mkN_2' + nlN_1' = 0 \pmod{N} \ ,
\end{equation*}
the set of indices which satisfy the congruence in \cref{eq:to_satisfy} is given by the coset,
\begin{equation*}
    \set{(m, n) \in \mathbb{Z}_{N_1} \times \mathbb{Z}_{N_2} : mkN_2' + nlN_1' = jd \pmod{N}} = (m_j, n_j) + H\ .
\end{equation*}
Since the set of cosets partitions the group evenly and there are $D$ cosets of $H$ (as there are $D$ choices of $j$), we have shown that each entry of the subsignal is a sum of $|H| = N_1N_2/D$ entries of ${\tt X}$.
\end{proof}

%


\bibliographystyle{siamplain}
\bibliography{bib1}

\end{document}